\setlist{nolistsep}
\renewcommand{\mathcal}{\EuScript}
\theoremstyle{plain}
\def\swappedhead#1#2#3{%
	\thmnumber{\@upn{\the\thm@headfont#2\@ifnotempty{#1}{.~}}}%
	\thmname{#1}%
	\thmnote{ {\the\thm@notefont(#3)}}}
\newtheorem{thm}{Theorem}[section]
\newtheorem{lem}[thm]{Lemma}
\newtheorem{prop}[thm]{Proposition}
\theoremstyle{definition}
\newtheorem{rem}[thm]{Remark}
\newtheorem{para}[thm]{}
\newcommand{\antishriek}{\text{\raisebox{\depth}{\textexclamdown}}}
\newcommand{\Q}{\mathsf{Q}}
\renewcommand{\P}{\mathsf{P}}
\newcommand{\K}{\mathbf{K}}
\newcommand{\g}{\mathfrak{g}}
\newcommand{\h}{\mathfrak{h}}
\newcommand{\U}{\mathscr{U}}
\newcommand{\Uc}{\mathscr{U}^c}
\newcommand{\cdga}{\mathsf{CDGA}}
\newcommand{\dga}{\mathsf{DGA}}
\newcommand{\ucdga}{\mathsf{CDGA}^u}
\newcommand{\udga}{\mathsf{DGA}^u}
\newcommand{\dgla}{\mathsf{DGLA}}
\newcommand{\lie}{\mathsf{Lie}}
\newcommand{\com}{\mathsf{Com}}
\newcommand{\ass}{\mathsf{Ass}}
\newcommand{\colie}{\mathsf{coLie}}
\newcommand{\cocom}{\mathsf{coCom}}
\newcommand{\coass}{\mathsf{coAss}}
\newcommand{\PP}{\mathsf{P}}
\newcommand{\QQ}{\mathsf{Q}}
\newcommand{\EE}{\mathsf{E}}
\newcommand{\DD}{\mathsf{D}}
\newcommand{\Coo}{\mathsf{C}_\infty}
\newcommand{\Aoo}{\mathsf{A}_\infty}
\newcommand{\Loo}{\mathsf{L}_\infty}
\newcommand{\cat}{\mathcal{C}}
\newcommand{\ho}[1]{\mathrm{Ho}(#1)}
\newcommand{\map}[1]{\mathrm{map}_{#1}}
\newcommand{\rmap}[1]{\mathbf{R}\mathrm{map}_{#1}}
\newcommand{\MC}{\mathrm{MC}}
\renewcommand{\Bar}{\mathrm{B}}
\newcommand{\Bass}{\mathrm{B}}
\newcommand{\Buass}{\mathrm{B}_\mathsf{uAss}}
\newcommand{\Cobucom}{\Omega_\mathsf{uCom}}
\newcommand{\Cobass}{\Omega}
\newcommand{\Blie}{\mathrm{B}_{\mathsf{Lie}}}
\newcommand{\Coblie}{\Omega_{\mathsf{Lie}}}
\newcommand{\Bcom}{\mathrm{B}_{\mathsf{Com}}}
\newcommand{\Bucom}{\mathrm{B}_{\mathsf{uCom}}}
\newcommand{\Sym}{\mathrm{Sym}}
\newcommand{\cotimes}{~\widehat{\otimes}~}
\title{Commutative homotopical algebra embeds into non-commutative homotopical algebra}
\author{Ricardo Campos}
\address{Ricardo Campos\\Institut de Mathématiques de Toulouse, UMR5219, Université de Toulouse, CNRS, UPS, F-31062 Toulouse Cedex 9, France}
\email{ricardo.campos@math.univ-toulouse.fr}
\author{Dan Petersen}
\address{Dan Petersen\\{Matematiska Institutionen, Stockholms Universitet, 106 91 Stockholm, Sweden}}
\email{dan.petersen@math.su.se}
\author{Daniel Robert-Nicoud}
\address{Daniel Robert-Nicoud}
\email{daniel.robertnicoud@gmail.com}
\author{Felix Wierstra}
\address{Felix Wierstra\\Korteweg-de Vries Institute for Mathematics, University of Amsterdam, Science Park 105-107, 1098 XG Amsterdam, Netherlands}
\email{felix.wierstra@gmail.com}
\subjclass[2020]{{Primary 18M70; secondary 13D10, 13D03, 16E40, 17B35, 55P62}}
\keywords{Rational homotopy theory, universal enveloping algebras, deformation theory, operads, Koszul duality}
\thanks{}
\begin{document}

\begin{abstract}
	Over a field of characteristic zero, we show that the forgetful functor from the homotopy category of commutative dg algebras to the homotopy category of dg associative algebras is faithful. In fact, the induced map of derived mapping spaces gives an injection on all homotopy groups at any basepoint. We prove similar results both for unital and non-unital algebras, and also Koszul dually for the universal enveloping algebra functor from dg Lie algebras to dg associative algebras. An important ingredient is a natural model for these derived mapping spaces as Maurer--Cartan spaces of complete filtered dg Lie algebras (or curved Lie algebras, in the unital case). 
\end{abstract}

\maketitle

\section{Commutativity and associativity}

\begin{para}
	The forgetful functor from commutative rings to associative algebras is a canonical example of a \emph{fully faithful functor}: the notion of a homomorphism $A \to A'$ does not depend on whether $A$ and $A'$ are considered as commutative or just associative algebras.
\end{para}

\begin{para}
	In the setting of homotopical algebra, the analogous statement is emphatically false. Namely, when a forgetful functor $C \to D$ is fully faithful, this can be informally understood as saying that the category $C$ consists of objects of $D$ satisfying some \emph{property}, and the forgetful functor merely forgets about this property. Examples could be the forgetful functor from finite sets to all sets, or as just mentioned, the forgetful functor from commutative rings to associative algebras. But in a homotopy-theoretic situation we should not be asserting the \emph{property} that $xy=yx$; rather, we should supply a \emph{particular} homotopy between $xy$ and $yx$ (and then higher homotopies, etc.), and these homotopies are additional \emph{structure} on the algebra. 
\end{para}

\begin{para}
	To be more explicit, let us consider the two categories $\ho{\cdga}$ and $\ho{\dga}$ of non-unital commutative dg algebras, and non-unital dg algebras respectively, localized at the class of quasi-isomorphisms. We always work over a field $\K$ of characteristic zero in what follows. In this case both categories can be modeled by the Hinich model structure \cite{hinichmodelstructure}, in which all objects are fibrant. This means in particular that the set of morphisms $A \to A'$ in the homotopy category can be computed as the set of homotopy classes of morphisms $\mathrm Q A \to A'$, where $\mathrm QA$ denotes a cofibrant replacement of $A$. Now on the one hand a cofibrant object of $\cdga$ very rarely remains cofibrant in $\dga$; on the other hand, the notions of homotopy in the two categories are also different. The consequence is that if $A, A'$ are commutative dg algebras, then morphisms $A\to A'$ in $\ho{\dga}$ are given by an a priori larger set of maps modulo an a priori coarser notion of homotopy than morphisms in $\ho{\cdga}$. We denote similarly by $\ho\ucdga$ and $\ho\udga$ the homotopy categories of unital commutative dg algebras and unital dg algebras, respectively. Everything we have said in this paragraph remains true in the unital case, too. 
\end{para}

\begin{para}\label{lurie example}
	In fact, the functor $\ho\udga \to \ho\udga$ is \emph{not} full. Take for example $A$ to be the polynomial ring $\K[x,y]$, considered as a commutative dg algebra concentrated in degree $0$. It is cofibrant in the category $\ucdga$, but not in $\udga$. A cofibrant replacement  in $\udga$ is given by non-commutative polynomial ring $\K\langle x, y, z\rangle$ with $\vert z \vert = 1$ and $dz = xy-yx$.  Therefore, for any $A'$ in $\ucdga$, we have
	\[
	\hom_{\ho{\ucdga}}(A, A')\cong H_0(A')\times H_0(A'),
	\]
	but
	\[
	\hom_{\ho{\udga}}(A, A')\cong H_0(A')\times H_0(A') \times H_1(A').
	\]
	The same example would work just as well if we had worked with non-unital algebras.
	Our main result is the following.
\end{para}

\begin{thm}\label{mainthm}
	Over a base field of characteristic zero, the forgetful functors
	\[
	\ho{\cdga}\longrightarrow\ho{\dga}
	\qquad \text{and}\qquad \ho{\ucdga}\longrightarrow\ho{\udga}
	\]
	are faithful.
\end{thm}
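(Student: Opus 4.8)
The plan is to deduce \cref{mainthm} from the stronger assertion that the forgetful functors induce, on every derived mapping space and at every basepoint, a map that is injective on all homotopy groups; faithfulness of the homotopy categories is then the statement on $\pi_0$. Fix commutative dg algebras $A,B$ (viewed as associative algebras where needed). Using the model promised in the abstract I would present
\[
	\rmap{\cdga}(A,B)\;\simeq\;\MC_\bullet\!\left(\g_{\com}(A,B)\right),\qquad
	\rmap{\dga}(A,B)\;\simeq\;\MC_\bullet\!\left(\g_{\ass}(A,B)\right),
\]
where $\g_{\com}(A,B)=\mathrm{Hom}(\Bcom A,B)$ and $\g_{\ass}(A,B)=\mathrm{Hom}(\mathrm B_{\mathsf{Ass}}A,B)$ are the convolution dg Lie algebras, complete for the filtrations inherited from the coradical filtrations of the bar constructions, with Maurer--Cartan elements the $\Coo$- and $\Aoo$-morphisms $A\to B$; the forgetful functor is modelled by the morphism of complete filtered dg Lie algebras $\iota\colon\g_{\com}(A,B)\to\g_{\ass}(A,B)$ given by restriction along the canonical surjection $\mathrm B_{\mathsf{Ass}}A\twoheadrightarrow\Bcom A$ (the one coming from $\ass\to\com$ under Koszul duality). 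Since $\MC_\bullet$ is functorial, it then suffices to show: (i) for every $\tau\in\MC\!\left(\g_{\com}(A,B)\right)$ the twisted morphism $\iota^{\tau}\colon\g_{\com}(A,B)^{\tau}\to\g_{\ass}(A,B)^{\iota\tau}$ is injective on homology (this gives injectivity on $\pi_{n}$, $n\geq 1$, at the basepoint $\tau$, since those groups are computed by the homology of the twist); and (ii) two Maurer--Cartan elements of $\g_{\com}(A,B)$ that become gauge equivalent in $\g_{\ass}(A,B)$ are already gauge equivalent in $\g_{\com}(A,B)$ (injectivity on $\pi_{0}$, i.e.\ faithfulness).

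The crucial input, and the one place where commutativity of $A$ and the characteristic-zero hypothesis enter, is the $\lambda$-decomposition of the bar construction of a commutative algebra: the Eulerian idempotents $e^{(p)}_{n}\in\mathbb{Q}[\mathbb{S}_{n}]$ act on $\mathrm B_{\mathsf{Ass}}A$ and, because $A$ is commutative, commute with the bar differential (Barr, Gerstenhaber--Schack, Loday), so $\mathrm B_{\mathsf{Ass}}A$ decomposes as a complex with its weight-one summand canonically identified with the Harrison complex $\Bcom A$. Dually this exhibits $\iota$ as a \emph{split monomorphism of complexes}, with retraction $r$ induced by the weight-one idempotent. It is \emph{not} a retraction of dg Lie algebras --- there is no operad retraction $\ass\to\lie$ --- so (i) and (ii) are not formal consequences of $r$. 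To obtain them I would induct along the complete filtration: at each layer the obstruction to correcting a twisted cycle, respectively a gauge produced on the associative side, so that it comes from $\g_{\com}(A,B)$, lies in the part of a bar layer transverse to its Harrison summand and is annihilated by the weight-one Eulerian idempotent of that layer --- legitimate because that idempotent is compatible with the relevant differential on the associated graded --- and the successive corrections converge by completeness. This upgrades $r$ to a retraction of the twisted complex, proving (i), and converts an associative gauge into a commutative one, proving (ii).

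For the unital statements one runs the same argument with the unital bar constructions $\mathrm B_{\mathsf{uAss}}$ and $\Bucom$ in place of $\mathrm B_{\mathsf{Ass}}$ and $\Bcom$; these are \emph{curved} coalgebras, so $\g_{\mathsf{uCom}}(A,B)$ and $\g_{\mathsf{uAss}}(A,B)$ are complete filtered \emph{curved} Lie algebras and both the Maurer--Cartan space and the gauge relation must be interpreted in that setting, but the curvature sits in low weight, is compatible with the Eulerian idempotents, and the layer-by-layer correction goes through verbatim. (The example in \ref{lurie example} shows that faithfulness cannot be improved to fullness.)

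I expect the main obstacle to be precisely this inductive core --- verifying that the Eulerian idempotents interact correctly with the \emph{twisted} (and, in the unital case, \emph{curved}) differentials appearing on the convolution Lie algebras, not merely with the plain bar differential, and that the correction process converges --- together with the preliminary task, only alluded to in the abstract, of constructing the Maurer--Cartan-space models of the derived mapping spaces, including the curved unital case, with enough functoriality to support the reductions above.
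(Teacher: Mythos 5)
Your overall architecture is the same as the paper's: model $\rmap{\cdga}(A,B)$ and $\rmap{\dga}(A,B)$ as Maurer--Cartan spaces of the convolution Lie algebras $\hom_\K(\Bcom A,B)$ and $\hom_\K(\Bass A,B)$ with their complete coradical filtrations (this is \cref{mappingspaces}), and then exploit a splitting of $\Bass A \to \Bcom A$ to get injectivity on all homotopy groups, with $\pi_0$ giving faithfulness. Where you diverge is in what you ask of the splitting and in how you handle the passage to the limit, and that is where there is a genuine gap.

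The paper does not run the layer-by-layer obstruction/correction argument by hand; it explicitly explains in \S\ref{outline} why that naive scheme gets stuck at $\pi_0$: at each filtration stage one can show that \emph{some} commutative gauge exists, but there is no reason the gauges chosen stage by stage compose to anything convergent. The fix is structural, not just "completeness": one needs the retraction $r\colon \hom_\K(\Bass A,B)\to\hom_\K(\Bcom A,B)$ to be a morphism of filtered $\hom_\K(\Bcom A,B)$-\emph{modules}, and then the whole convergence problem is absorbed into the criterion \cref{criterion} (Theorem~1.7 of the previous paper). Your Eulerian-idempotent splitting is only asserted to be a splitting of filtered complexes, and you explicitly note it is not a Lie map; but a filtered chain-level splitting is not enough. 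Already for your step (i) you need $r$ to be a chain map for the \emph{twisted} differentials $d+[\alpha,-]$, $\alpha\in\MC(\hom_\K(\Bcom A,B))$, and compatibility "on the associated graded" (where the twist vanishes) does not give this, nor does it by itself give injectivity on homology of unbounded complete filtered complexes; the $\h$-module property gives it immediately, since the twist is by an element of $\h$. For step (ii) the situation is worse: "the successive corrections converge by completeness" is exactly the delicate point, and your sketch supplies no mechanism for choosing gauges small enough at each stage --- which is precisely what the module-compatible retraction plus \cref{criterion} are for. The paper obtains the needed module property from the dual Poincar\'e--Birkhoff--Witt theorem in its refined form: $\Bass A \cong U^c(\Bcom A)\cong \Sym^c(\Bcom A)$ as $\Bcom A$-\emph{comodules}, coming from the isomorphism of infinitesimal bimodules $\ass\cong\com\circ\lie$; applying $\hom_\K(-,B)$ then yields the filtered module retraction (\cref{retraction}). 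Your Eulerian idempotent is essentially this same dual PBW splitting in disguise, so your route could likely be repaired, but you would have to (a) prove that the weight-one idempotent splitting $\Bcom A\to\Bass A$ is a morphism of $\Bcom A$-comodules, and (b) either invoke the criterion \cref{criterion} or genuinely prove the convergence statement you currently only assert; the same comments apply verbatim to the curved/unital case, where in addition the Maurer--Cartan model of the mapping spaces (cofibrancy of the curved bar-cobar resolution and the completed-tensor comparison) is real work rather than a formality.
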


\begin{para}
	The argument we will use for this result will in fact prove a stronger result at the level of simplicial mapping spaces. We denote by $\rmap{\cat}$ the derived mapping space of a model category $\cat$.
\end{para}

\begin{thm}\label{mainthm2}
	Over a base field of characteristic zero, the natural maps
	\[
	\rmap{\cdga}(A, A')\longrightarrow\rmap{\dga}(A, A')
	\]
	and
	\[
	\rmap{\ucdga}(A, A')\longrightarrow\rmap{\udga}(A, A')
	\]
	induced by the forgetful functors give rise to an injection on all homotopy groups at any basepoint, for any $A, A'\in\cdga$, resp.\ $\ucdga$.
\end{thm}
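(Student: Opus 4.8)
The plan is to replace both derived mapping spaces by Maurer--Cartan spaces of explicit complete filtered dg Lie algebras --- complete \emph{curved} Lie algebras in the unital case --- and then to exploit a Poincaré--Birkhoff--Witt type splitting relating them. Concretely, for $\com$- (resp.\ $\ass$-) algebras $A,A'$ one builds from the Koszul-dual cooperad a complete dg Lie algebra $\mathfrak g^{\com}_{A,A'}=\hom(\Bar_{\com}A,A')$ (resp.\ $\mathfrak g^{\ass}_{A,A'}=\hom(\Bar_{\ass}A,A')$) whose Maurer--Cartan space models $\rmap{\cdga}(A,A')$ (resp.\ $\rmap{\dga}(A,A')$), and the forgetful functor is reflected by a morphism of complete (curved) Lie algebras $\phi\colon\mathfrak g^{\com}_{A,A'}\to\mathfrak g^{\ass}_{A,A'}$. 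Setting up this model --- especially in the unital case, where $\mathsf{uCom}$ and $\mathsf{uAss}$ fail to be Koszul and one must pass to curved Koszul duality and curved Lie algebras --- is the substantive part of the work and is carried out in the sections that follow. Granting it, recall that for a complete (curved) Lie algebra $\mathfrak g$ one has $\pi_0\MC_\bullet(\mathfrak g)=\MC(\mathfrak g)/(\text{gauge equivalence})$, while for $n\ge 1$ and a basepoint $\tau$ the group $\pi_n(\MC_\bullet(\mathfrak g),\tau)$ is, with suitable conventions, the homology $H_n(\mathfrak g^\tau)$ of the twisted dg Lie algebra. Thus \cref{mainthm2} reduces to two assertions, for every Maurer--Cartan element $\tau$ of $\mathfrak g^{\com}_{A,A'}$: that $\phi$ is injective on gauge-equivalence classes of Maurer--Cartan elements (the case $n=0$, which is \cref{mainthm}), and that the twisted map $\phi\colon\mathfrak g^{\com,\tau}_{A,A'}\to\mathfrak g^{\ass,\phi(\tau)}_{A,A'}$ is injective on homology.

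The algebraic heart of the argument is the Hodge (or $\lambda$-) decomposition and its compatibility with the Gerstenhaber bracket. Because $A$ is commutative, the dg coalgebra $\Bar_{\ass}A$ carries the classical Eulerian-idempotent decomposition $\Bar_{\ass}A=\bigoplus_{w\ge 1}\Bar_{\ass}A(w)$, respected by the differential, with weight-one part the cofree Lie coalgebra $\Bar_{\com}A$; this induces a weight grading $\mathfrak g^{\ass}_{A,A'}=\prod_{w\ge 1}\mathfrak g^{\ass}_{A,A'}(w)$ preserved by the (curved) differential, under which the bracket sends $\mathfrak g^{\ass}_{A,A'}(a)\otimes\mathfrak g^{\ass}_{A,A'}(b)$ into $\mathfrak g^{\ass}_{A,A'}(a+b-1)$, and $\phi$ identifies $\mathfrak g^{\com}_{A,A'}$ with the weight-one summand. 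Operadically this is nothing but the Poincaré--Birkhoff--Witt splitting $\ass\cong\com\circ\lie$ of underlying $\mathbb S$-modules, dualized on the bar constructions, and the analogous statement holds in the unital setting. Two consequences follow: first, $\prod_{w\ge 2}\mathfrak g^{\ass}_{A,A'}(w)$ is a (curved) dg Lie ideal while the weight-one part is a subalgebra, so the projection $r\colon\mathfrak g^{\ass}_{A,A'}\to\mathfrak g^{\com}_{A,A'}$ onto weight one is a morphism of complete (curved) Lie algebras with $r\circ\phi=\id$; second, a Maurer--Cartan element $\tau$ of $\mathfrak g^{\com}_{A,A'}$ lies in weight one, so $[\tau,-]$ preserves the weight grading and the twist $\mathfrak g^{\ass,\phi(\tau)}_{A,A'}$ is again weight graded with weight-one part $\mathfrak g^{\com,\tau}_{A,A'}$ and positive-weight part a dg Lie ideal.

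From the second consequence, $r$ twists to a retraction $\mathfrak g^{\ass,\phi(\tau)}_{A,A'}\to\mathfrak g^{\com,\tau}_{A,A'}$ of the twisted map $\phi$, so $\phi$ is a split monomorphism of chain complexes after twisting, hence injective on homology; this gives the statement for all $n\ge 1$. For $n=0$, suppose Maurer--Cartan elements $\tau_0,\tau_1$ of $\mathfrak g^{\com}_{A,A'}$ become gauge equivalent in $\mathfrak g^{\ass}_{A,A'}$, say $\tau_1=\xi\cdot\tau_0$ for a gauge $\xi$. Since a morphism of complete (curved) Lie algebras carries gauge equivalences to gauge equivalences (it induces a map of Maurer--Cartan spaces), applying $r$ and using that $r$ fixes $\tau_0$ and $\tau_1$ yields $\tau_1=r(\xi)\cdot\tau_0$ inside $\mathfrak g^{\com}_{A,A'}$, so $\tau_0$ and $\tau_1$ were gauge equivalent to begin with. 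This proves injectivity on $\pi_0$ and hence \cref{mainthm}.

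The main obstacle is the first paragraph, not the last two: producing the Maurer--Cartan model of the derived mapping space and checking that the forgetful functor is reflected by $\phi$, uniformly in the non-unital and the unital cases. In the unital case neither relevant operad is Koszul, the bar construction is curved, the convolution Lie algebra acquires a curvature term, and one must verify both that its Maurer--Cartan space still computes $\rmap{\udga}$ and that the (now curved) Poincaré--Birkhoff--Witt splitting and the description of gauge equivalence survive; this is where the curved Koszul duality formalism is needed. Once that machinery is in place, the weight-grading argument above is essentially formal.
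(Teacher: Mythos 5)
Your first step---modelling the two mapping spaces by the Maurer--Cartan spaces of the convolution (curved) Lie algebras $\hom_\K(\Bcom A,A')\to\hom_\K(\Bass A,A')$, with curved Koszul duality in the unital case---is exactly the paper's route. The gap is in the algebraic heart of your argument: the claim that the Eulerian/Hodge weight grading of $\Bass A\cong\Sym^c(\Bcom A)$ makes the convolution bracket send weight $a$ times weight $b$ into weight $a+b-1$, so that the part of weight $\geq 2$ is a Lie ideal and the weight-one projection $r$ is a morphism of (curved) Lie algebras. This is false. What the (dual) PBW theorem gives is a splitting of $\Bass A\to\Bcom A$ as a map of $\Bcom A$-\emph{comodules} only; equivalently, $r$ commutes with brackets against weight-one elements (the $\hom_\K(\Bcom A,A')$-module structure of \cref{retraction}), but not with brackets of two elements of higher weight. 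To see the failure concretely, dualize: for $A$ a two-dimensional algebra with zero product, $\hom_\K(\Bass A,A')$ is the completed tensor algebra on $(sA)^*$ tensored with $A'$, i.e.\ $U(\g)$-type convolution with $\g$ a free Lie algebra, and the weight decomposition is the symmetrization decomposition $U\g\cong\Sym(\g)$, which is an isomorphism of coalgebras and $\g$-modules but \emph{not} compatible with the commutator. For instance in $U(\mathfrak{sl}_2)$ one computes $[e^2,f^2]=4\,\mathrm{sym}(efh)-\tfrac{2}{3}h$: a bracket of two weight-two elements with a nonzero weight-one component; by naturality of symmetrization the same already happens in $T(x,y)=U$ of a free Lie algebra, which is precisely the relevant (co)free situation. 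So $[\,\text{wt}\,2,\text{wt}\,2\,]\not\subseteq\prod_{w\ge2}$, the positive-weight part is not an ideal, and $r$ is not a Lie map.

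The consequences are asymmetric. Your argument for $\pi_n$, $n\ge 1$, survives, because twisting by $\phi(\tau)$ with $\tau$ of weight one only requires $r$ to intertwine $[\phi(\tau),-]$ with $[\tau,-]$, which is exactly the module compatibility; this is the easy half (Amrani/Berglund), as the paper notes. But your $\pi_0$ step collapses: since $r$ is not a morphism of (curved) Lie algebras, it does not carry gauge equivalences to gauge equivalences, so from $\tau_1=\xi\cdot\tau_0$ in $\hom_\K(\Bass A,A')$ you cannot conclude $\tau_1=r(\xi)\cdot\tau_0$ in $\hom_\K(\Bcom A,A')$. This is precisely the difficulty the paper isolates in \S\ref{outline}: the only structure available is a filtration-preserving retraction as filtered $\hom_\K(\Bcom A,A')$-modules, and converting that into injectivity on $\pi_0$ requires the obstruction-theoretic argument with convergence control of \cref{criterion} (Theorem 1.7 of \cite{CPRNW19}), for which your proposal offers no substitute; had your ideal/retraction claim been true, that criterion (and much of the predecessor paper) would have been unnecessary. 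The same objection applies verbatim to your treatment of the curved/unital case.
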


\begin{para}\label{amrani remark}
	For the homotopy groups $\pi_n$ with $n > 0$ this recovers a theorem of Amrani \cite{amrani}, so only the result about $\pi_0$ (i.e. \cref{mainthm}) is new. The assertion about $\pi_0$ is significantly trickier to prove.
\end{para}

\begin{para}
	Although the structure of the proof is very similar in both the unital and non-unital case, the argument is technically simpler when dealing with not necessarily unital algebras. For the rest of the introduction we focus on this case. \label{focus on non-unital case}
\end{para}

\begin{rem}
	The set of homotopy classes of maps $A \to A'$ in $\ho\dga$ can also be understood in terms of $\Aoo$-morphisms. This has the advantage that one does not have to cofibrantly replace $A$. Namely, $\hom_{\ho\dga}(A,A')$ can be identified with the set of $\infty$-morphisms of $\Aoo$-algebras from $A$ to $A'$, modulo $\Aoo$-homotopy. Similarly $\hom_{\ho\cdga}(A,A')$ is the set of $\Coo$-morphisms from $A$ to $A'$ modulo $\Coo$-homotopy. Thus, \cref{mainthm} can be reformulated as saying that if two $\Coo$-morphisms are $\Aoo$-homotopic, then they are also $\Coo$-homotopic. Such a result seems potentially quite useful since the notion of $\Aoo$-homotopy is much simpler than the notion of $\Coo$-homotopy, the reason being that the $\Aoo$-operad is non-symmetric. Indeed, suppose that $f, g : A \to A'$ are $\Aoo$-morphisms between $\Aoo$-algebras, and let 
	\[
	\Bass f, \Bass g : \Bass A \to \Bass A'
	\]
	be the corresponding maps of coassociative coalgebras. By a homotopy between $f$ and $g$ we mean a $(\Bass f,\Bass g)$-coderivation $H:\Bass A \to \Bass A'$ such that $\partial H = \Bass f - \Bass g$. We refer to \cite[Section 1.3.4]{lefevre-hasegawa} for more details. The notion of homotopy of $\infty$-morphisms over a symmetric operad is more complicated and is discussed in detail in \cite{dotsenko-poncin}. 
\end{rem}

\begin{para}
	This paper may be considered as a sequel, or coda, to our previous paper \cite{CPRNW19}. We say that a functor $F \colon C \to D$ is \emph{injective on isomorphism classes} if $F(X) \cong F(Y)$ implies $X \cong Y$. In Theorem A of  \cite{CPRNW19} we proved the result that $\ho\cdga\to\ho\dga$ is injective on isomorphism classes over a field of characteristic zero (and similarly in the unital case). This result and \cref{mainthm} are morally similar in that both try to probe the failure of the forgetful functor to be fully faithful (as clearly a fully faithful functor is injective on isomorphism classes). Putting the two papers together we see that the homotopy theory of commutative dg algebras embeds faithfully into the non-commutative homotopy theory in a precise sense. What is also striking is that the essential structure of the two arguments is extremely similar. 
\end{para}

\begin{para}
	Let us try to bring out this similarity. The proof of  \cite[Theorem A]{CPRNW19} quickly reduces to the following statement: given two $\Coo$-algebra structures on the same chain complex, and an $\Aoo$-isotopy between them, there must also exist a $\Coo$-isotopy between them. Similarly \cref{mainthm} reduces to the statement that given two $\Coo$-morphisms between $\Coo$-algebras and an $\Aoo$-homotopy between them, there also exists a $\Coo$-homotopy between them. To treat both situations on the same footing, we say simply that we are given two $ \Coo$-objects $\xi$ and $\xi'$ related by an $\Aoo$-gauge, and we want to find a $\Coo$-gauge from $\xi$ to $\xi'$.  More precisely, in both cases the problems are best described in terms of deformation problems governed by dg Lie algebras. In our previous paper we considered two  dg Lie algebras whose Maurer--Cartan elements correspond to $\Aoo$-algebra structures (resp. $\Coo$-algebra structures) on some fixed chain complex, such that gauges correspond to $\Aoo$-isotopies (resp.\ $\Coo$-isotopies). Similarly, in the current paper we have two dg Lie algebras that describe the set of $\Aoo$-morphisms (resp. $\Coo$-morphisms) between two fixed such algebras, with gauges now corresponding to $\Aoo$-homotopies (resp. $\Coo$-homotopies). In both cases there is an inclusion of the Lie algebra encoding $\Coo$-structures into the Lie algebra encoding $\Aoo$-structures. 
\end{para}

\begin{para}\label{outline}
	Once the problem is formulated in terms of $\Aoo$- and $\Coo$-structures, it is natural to try to work one coefficient at a time, which amounts to noting that the dg Lie algebras in question come with natural filtrations (or gradings) by arity of the $\Aoo$- and $\Coo$-operations. Let $\tau_{\leq n}\xi$ be the truncation of $\xi$ to a $\mathsf C_n$-object, discarding all operations of arity bigger than $n$. By an inductive obstruction theory argument one can in both cases show that $\tau_{\leq n}\xi$ and $\tau_{\leq n}\xi'$ are $\mathsf C_n$-gauge equivalent for every $n$. Indeed, if we have an equivalence between $\tau_{\leq n}\xi$ and $\tau_{\leq n}\xi'$, then there is a well defined obstruction class for finding a gauge equivalence between $\tau_{\leq (n+1)}\xi$ and $\tau_{\leq (n+1)}\xi'$. More specifically, there are \emph{two} classes $\mathrm{o}_{n+1}^\mathsf{A} \in H^\mathsf{A}$ and $\mathrm{o}_{n+1}^\mathsf{C} \in H^\mathsf{C}$, which are obstructions for finding an $\mathsf A_{n+1}$-gauge and a $\mathsf C_{n+1}$-gauge, respectively, both lying in certain cohomology groups. Now the great news in both situations is that $H^\mathsf{C}$ is a direct summand of $H^\mathsf{A}$, and $\mathrm{o}_{n+1}^\mathsf{C}$ is the projection of $\mathrm{o}_{n+1}^\mathsf{A}$ onto $H^\mathsf{C}$. But $\mathrm{o}_{n+1}^\mathsf{A}$ must vanish, since we had an $\Aoo$-gauge from $\xi$ to $\xi'$ by assumption, and then the obstruction $\mathrm{o}_{n+1}^\mathsf{C}$ vanishes, too. (In \cite{CPRNW19} the groups $H^\mathsf{C}$ and $H^\mathsf{A}$ are Harrison and Hochschild cohomology groups, respectively, and Harrison cohomology is a direct summand of Hochschild cohomology by a result of Barr \cite{barrhochschild}.) But at this point one gets stuck: there is no natural way to deduce from the fact that $\tau_{\leq n} \xi$ is $\mathsf{C}_n$-gauge equivalent to $\tau_{\leq n} \xi'$ for all $n$, that $\xi$ and $\xi'$ are $\Coo$-gauge equivalent. Although we can find a gauge at each step, this does not imply that transfinite composition of all these gauges is well defined or convergent. 
\end{para}

\begin{para}
	To make the argument work one needs to be more careful in the construction. Instead of saying at each step that a certain obstruction class vanishes in cohomology, so that \emph{some} gauge exists, we need to be able to choose a \emph{specific} gauge, which is small enough in a suitable sense, so that the sequence of gauges converges in the limit. To accomplish this, we pass from cohomology to cochains and consider the natural complete filtrations on everything in sight. The difficulties involved in making this strategy work can be bundled up into a criterion which we proved in \cite{CPRNW19}.
\end{para}

\begin{thm}[{\cite[Theorem 1.7]{CPRNW19}}]\label{criterion}
	Let $i \colon \h \to \g$ be a morphism of complete filtered dg Lie algebras over a field of characteristic zero. Suppose that there exists $r \colon \g \to \h$ such that $r \circ i = \mathrm{id}_\h$, and $r$ is a morphism of filtered $\h$-modules. Then the map $\MC_\bullet(\h) \longrightarrow \MC_\bullet(\g)$ induces an injection on all homotopy groups for all basepoints. In particular, the set of gauge equivalence classes of Maurer--Cartan elements of $\h$ injects into the gauge equivalence classes of Maurer--Cartan elements of $\g$. 
\end{thm}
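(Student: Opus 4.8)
\emph{Overview.} I would prove the stronger statement about homotopy groups, splitting the argument into the (essentially formal) case of $\pi_n$ with $n \geq 1$ and the (substantial) case of $\pi_0$. In both cases I first reduce to the basepoint $0 \in \MC(\h)$: given $\tau \in \MC(\h)$, pass to the twisted dg Lie algebras $\h^\tau$ and $\g^{i(\tau)}$, with differentials $d + [\tau, -]$ and $d + [i(\tau), -]$. These are again complete filtered (because $i(\tau) \in F^1\g$), the morphism $i$ twists to $i^\tau \colon \h^\tau \to \g^{i(\tau)}$, and — the key point — the hypotheses survive the twist: since $r$ is a morphism of $\h$-modules it intertwines $[i(\tau),-]$ with $[\tau,-]$, so $r$ is still a filtered chain map $\g^{i(\tau)} \to \h^\tau$, and $r \circ i^\tau = \id$ is unchanged. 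Thus it suffices to prove injectivity at the basepoint $0$.

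\emph{The groups $\pi_n$, $n \geq 1$.} Using the identification of $\pi_n(\MC_\bullet(\g), 0)$ with the homology of $\g$ in the appropriate degree (Getzler, Berglund), natural in $\g$, the map under study becomes $H_\ast(i)$. Since $r \circ i = \id$ at the chain level, $H_\ast(i)$ is split injective, with splitting $H_\ast(r)$; this is the part of the theorem that recovers Amrani's result \cite{amrani}. Note that here only the chain-retraction property of $r$ is used, the module structure having been needed just to perform the twist.

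\emph{The set $\pi_0$.} One must show: if $\tau' \in \MC(\h)$ and $i(\tau')$ is gauge equivalent to $0$ in $\g$, via some $\lambda \in F^1\g^0$, then $\tau'$ is gauge equivalent to $0$ in $\h$. The plan is to construct the witnessing gauge $\mu \in F^1\h^0$ as a limit $\mu = \lim_k \mu_k$, maintaining inductively that $\tau_k := e^{\mu_k} \cdot 0$ satisfies $\tau_k \equiv \tau' \pmod{F^{k+1}\h}$ and that $\mu_{k+1} \equiv \mu_k \pmod{F^{k+1}}$, so that completeness produces $\mu$ with $e^\mu \cdot 0 = \tau'$. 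At stage $k$ the discrepancy $\tau_k - \tau' \in F^{k+1}\h$ is, by the Maurer--Cartan equations for $\tau_k$ and $\tau'$, a cocycle modulo $F^{k+2}$, and the obstruction to carrying out the next step is its class in $H(\mathrm{gr}^{k+1}\h)$. Because the relation $r \circ i = \id$ passes to the associated graded, $\mathrm{gr}(i)$ is a split monomorphism and hence $H(\mathrm{gr}^{k+1}\h) \to H(\mathrm{gr}^{k+1}\g)$ is injective; so it is enough to annihilate the image of the obstruction, which is exactly the obstruction to gauging $i(\tau_k) = i(e^{\mu_k} \cdot 0)$ to $i(\tau')$ in $\g$ one filtration step further — something that can in principle be done, since by hypothesis those two elements are genuinely gauge equivalent in $\g$.

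\emph{The main obstacle.} The catch, and the reason $\pi_0$ is so much harder than the higher groups, is that ``genuinely gauge equivalent in $\g$'' does not by itself kill the obstruction in $H(\mathrm{gr}^{k+1}\g)$: one would like to choose the witnessing gauge already inside $F^{k+1}\g$, but pushing a gauge into higher filtration degree is itself obstructed (it is a lifting problem for stabilizers of Maurer--Cartan elements). To circumvent this I would carry along, through the same induction, a compatible witnessing gauge $\sigma_k \in F^1\g^0$ with $e^{\sigma_k} \cdot i(\tau_k) = i(\tau')$, and update $\mu_k$ using $r(\sigma_k) \in \h^0$; the equivariance identity $r\bigl(e^{i(\nu)} \cdot \beta\bigr) = e^{\nu} \cdot r(\beta)$ for $\beta$ in the image of $i$ — valid because $i$ is a Lie morphism and $r$ a morphism of $\h$-modules — keeps $\mu_k$ and $\sigma_k$ synchronised, and this, combined with the fact that $\ker r$ is a filtered subcomplex of $\g$ complementary to $i(\h)$ on the associated graded, is the mechanism I would use to force the successive obstructions to vanish. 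Making all of the filtration bookkeeping close up, so that the obstructions really do vanish and both $\mu_k$ and $\tau_k - \tau'$ converge as required, is where I expect the real work of the proof to lie, and it is precisely here that the module-retraction hypothesis is indispensable.
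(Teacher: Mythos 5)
You are trying to prove a statement that this paper does not actually reprove: it is imported verbatim from \cite[Theorem 1.7]{CPRNW19}, so the only comparison available is with that proof. Your reduction to the basepoint $0$ by twisting is correct, and your verification that the hypotheses survive the twist (using that $r$ is a map of dg $\h$-modules) is exactly the right observation; likewise the case $n\geq 1$ via Berglund's identification $\pi_n(\MC_\bullet(\h),0)\cong H_{n-1}(\h)$ and the chain-level splitting $r$ is fine, and is indeed the easy part (the paper notes it essentially recovers Amrani's theorem, with no use of the filtration hypothesis). But the entire substance of the theorem is the $\pi_0$ statement, and there your text is a plan rather than a proof: you yourself write that making the obstructions vanish and the iteration converge ``is where I expect the real work of the proof to lie.'' That deferred step is precisely the difficulty the introduction of this paper isolates (\S\ref{outline}): knowing that $i(\tau_k)$ and $i(\tau')$ are gauge equivalent in $\g$ by \emph{some} gauge in $F^1\g$ does not make the obstruction class in $H(\mathrm{gr}^{k+1}\g)$ vanish, and the naive coefficient-by-coefficient argument gets stuck exactly at the point where you stop.

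Moreover, the mechanism you sketch to get past this does not work as stated. The equivariance identity $r\bigl(e^{i(\nu)}\cdot\beta\bigr)=e^{\nu}\cdot r(\beta)$ is valid (it uses only that $r$ is a filtered map of dg $\h$-modules), but it applies only to gauges lying in the image of $i$. Your witnessing gauges $\sigma_k$ live in $\g$ and have no reason to be of the form $i(\nu)$; since $r$ is merely an $\h$-module retraction and not a morphism of Lie algebras, one has $r\bigl(e^{\sigma}\cdot\beta\bigr)\neq e^{r(\sigma)}\cdot r(\beta)$ in general, so ``updating $\mu_k$ by $r(\sigma_k)$'' does not transport the gauge relation from $\g$ to $\h$, and the claimed synchronisation of $\mu_k$ and $\sigma_k$ is not established. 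What is needed — and what the cited proof in \cite{CPRNW19} actually supplies — is an argument showing that after splitting a $\g$-gauge into its $r$-component and a remainder, one can correct by the $\h$-gauge $e^{r(\sigma_k)}$ and replace the witnessing $\g$-gauge by one of strictly higher filtration degree, with quantitative control ensuring both the $\h$-gauges and the discrepancies converge in the complete filtration. Without that filtration-improvement argument (or a substitute for it), the injectivity on $\pi_0$, i.e.\ \cref{mainthm} itself, remains unproven.
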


\begin{para}
	The assumption that $\h \to \g$ admits a retraction as $\h$-modules immediately implies that for any Maurer--Cartan element $\alpha$ of $\h$, there is a split injection in homology $H_\bullet(\h^\alpha) \to H_\bullet(\g^\alpha)$, where $(-)^\alpha$ denotes the effect of twisting the differential by the Maurer--Cartan element $\alpha$. In terms of the heuristic outline of \S\ref{outline}, this corresponds to the fact that $H^\mathsf{C}$ is a direct summand of $H^\mathsf{A}$, and that all obstruction classes vanish. Having the retraction be filtration-preserving is what gives the extra leverage and allows passage to the limit. We can now also justify the remark made in \S\ref{amrani remark}, that the assertion about higher homotopy groups is easier than the assertion about $\pi_0$ in \cref{mainthm2}. Indeed, for $n>0$ a theorem of Berglund \cite{berglund2015rational} furnishes an identification $\pi_n(\MC_\bullet(\h),\alpha) \cong H_{n-1}(\h^\alpha)$, so for the injection on higher homotopy groups it is not necessary to suppose compatibility of the retraction with the filtrations, and the convergence problems are irrelevant.
\end{para}

\begin{para}
	In order to apply \cref{criterion} we need suitable models for mapping spaces in $\cdga$ and $\dga$ in terms of Maurer--Cartan spaces of complete filtered dg Lie algebras. Before stating the result let us recall some notation and preliminaries. If $A$ is a dg algebra, we denote by $\Bass A$ its bar construction. If furthermore $A$ is commutative, we write $\Bcom A$ for its ``commutative'' bar construction, i.e.\ the Koszul dual Lie coalgebra of $A$. Recall also that the space of linear maps from a dg Lie coalgebra to a commutative dg algebra is itself a dg Lie algebra. (This is the ``linear dual'' of the perhaps more familiar fact that the tensor product of a Lie algebra and a commutative ring is a Lie algebra.) Similarly the space of linear maps from a dg coalgebra to a dg algebra is itself a dg algebra, and in particular a dg Lie algebra. 
\end{para}

\begin{prop}\label{mappingspaces}
	Let $A$ and $A'$ be dg algebras over a field $\K$ of characteristic zero. Then:
	\begin{itemize}
			\item[$\diamond$] $\rmap{\dga}(A, A')\simeq\MC_\bullet(\hom_\K(\Bass A, A'))$.
			\item[$\diamond$] If $A$ and $A'$ are commutative, then $\rmap{\cdga}(A, A')\simeq\MC_\bullet(\hom_\K(\Bcom A, A'))$.
		\end{itemize}
	Here, both $\hom_\K(\Bass A, A')$ and $\hom_\K(\Bcom A, A')$ are considered as complete filtered dg Lie algebras, with the filtrations induced by the coradical filtrations of $\Bass A$ and $\Bcom A$ respectively. In the commutative case, the projection $\Bass A\to \Bcom A$ induces the natural map $\rmap{\dga}(A, A')\longrightarrow\rmap{\cdga}(A, A').$
\end{prop}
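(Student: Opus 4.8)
The plan is to combine the Koszul-duality description of twisting morphisms with the computation of derived mapping spaces by simplicial frames, and then to reconcile the complete filtered convolution Lie algebras in the statement with the naive convolution algebras computing the individual sets of simplices. Recall two basic facts. For a conilpotent dg coalgebra $C$ and a dg algebra $B$, the convolution object $\hom_\K(C,B)$ is a dg algebra, hence a dg Lie algebra under the commutator, and its Maurer--Cartan set is the set $\mathrm{Tw}(C,B)$ of twisting morphisms, which by the bar--cobar adjunction equals $\hom_{\dga}(\Omega C,B)$. Similarly, for a conilpotent Lie coalgebra $\mathfrak c$ and a commutative dg algebra $B$, the convolution $\hom_\K(\mathfrak c,B)$ is a dg Lie algebra with bracket $f\otimes g\mapsto\mu_B\circ(f\otimes g)\circ\delta_{\mathfrak c}$, whose Maurer--Cartan set is $\mathrm{Tw}(\mathfrak c,B)\cong\hom_{\cdga}(\Omega_{\com}\mathfrak c,B)$, with $\Omega_{\com}$ the commutative cobar construction. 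Applied to $C=\Bass A$ and $\mathfrak c=\Bcom A$, the counits $\Omega\Bass A\to A$ and $\Omega_{\com}\Bcom A\to A$ are cofibrant resolutions in $\dga$ and $\cdga$: in each case the underlying graded (associative, resp.\ commutative) algebra is free, and the coradical filtration exhibits the cobar construction as a cell complex.

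Since all objects of $\dga$ and $\cdga$ are fibrant, $\rmap{\dga}(A,A')$ is computed by $[n]\mapsto\hom_{\dga}(\Omega\Bass A,A'_n)$ for any simplicial frame $A'_\bullet$ of $A'$. Take $A'_\bullet=A'\otimes\Omega_\bullet$, where $\Omega_\bullet=\Omega^\bullet_{\mathrm{poly}}(\Delta^\bullet)$ is the simplicial commutative dg algebra of polynomial differential forms: each $A'\otimes\Omega_n\to A'$ is a quasi-isomorphism by the polynomial Poincaré lemma, and the matching maps $A'\otimes\Omega_n\to A'\otimes M_n\Omega_\bullet$ are surjective because $\Omega_n\to M_n\Omega_\bullet$ is, so $A'_\bullet$ is a simplicial frame; since $\Omega_n$ is commutative the same object is a simplicial frame in $\cdga$. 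Combining with the previous paragraph gives natural isomorphisms of simplicial sets
\[
\rmap{\dga}(A,A')\;\cong\;\bigl[\,n\mapsto\MC\bigl(\hom_\K(\Bass A,\,A'\otimes\Omega_n)\bigr)\,\bigr],\qquad \rmap{\cdga}(A,A')\;\cong\;\bigl[\,n\mapsto\MC\bigl(\hom_\K(\Bcom A,\,A'\otimes\Omega_n)\bigr)\,\bigr].
\]

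Finally, one must identify these with $\MC_\bullet$ of the complete filtered dg Lie algebras $\hom_\K(\Bass A,A')$ and $\hom_\K(\Bcom A,A')$, filtered by the duals of the coradical filtrations. The natural map $\hom_\K(\Bass A,A')\cotimes\Omega_n\to\hom_\K(\Bass A,A'\otimes\Omega_n)$, induced on each filtration quotient by the canonical $\hom_\K(V,A')\otimes\Omega_n\to\hom_\K(V,A'\otimes\Omega_n)$, is \emph{not} an isomorphism — its source is a proper, ``weight-complete'' dg Lie subalgebra of its target — but it is a filtration-preserving quasi-isomorphism: at each finite filtration stage $\hom_\K(V,-)$ is exact and $\Omega_n\simeq\K$, and the transition maps in the defining towers are surjective, so the limit remains a quasi-isomorphism. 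Since a filtered quasi-isomorphism of complete filtered dg Lie algebras induces a weak equivalence of Maurer--Cartan spaces (the filtered Goldman--Millson theorem), we obtain $\MC_\bullet(\hom_\K(\Bass A,A'))\simeq\rmap{\dga}(A,A')$ and, identically, $\MC_\bullet(\hom_\K(\Bcom A,A'))\simeq\rmap{\cdga}(A,A')$. The projection $\pi\colon\Bass A\twoheadrightarrow\Bcom A$ — the canonical surjection of conilpotent Lie coalgebras from the cofree coassociative one to the cofree Lie one, which for commutative $A$ is compatible with the bar differentials and satisfies $\kappa_{\com}\circ\pi=\kappa_{\ass}$ for the Koszul twisting morphisms to $A$ — induces by precomposition a morphism of complete filtered dg Lie algebras $\pi^{\ast}\colon\hom_\K(\Bcom A,A')\to\hom_\K(\Bass A,A')$, and unwinding the two bar--cobar adjunctions (so that $\pi^{\ast}$ becomes precomposition with the comparison $\Omega\Bass A\to\Omega_{\com}\Bcom A$ of resolutions over $A$) shows that $\MC_\bullet(\pi^{\ast})$ is the map induced by the forgetful functor. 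The main obstacle is precisely this last reconciliation: the complete filtered Lie algebra in the statement is strictly smaller than the convolution algebra whose Maurer--Cartan set computes the $n$-simplices, so the bar--cobar adjunction does not apply on the nose, and one must argue that passing to the weight-complete subalgebra is a filtered quasi-isomorphism and invoke homotopy invariance of $\MC_\bullet$; verifying that $\pi^{\ast}$ genuinely realizes the forgetful map, and not merely some map of the two models, is a further point requiring care.
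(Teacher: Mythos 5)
The overall structure you follow (bar--cobar resolution as cofibrant replacement, $\rmap{\dga}(A,A')_n=\hom_{\dga}(\Cobass\Bass A,A'\otimes\Omega_n)=\MC(\hom_\K(\Bass A,A'\otimes\Omega_n))$ via twisting morphisms, then comparison with the completed tensor product) is exactly the paper's, but your final step has a genuine gap, and it sits precisely at the point you yourself flag as the main obstacle. What must be shown is that the map of \emph{simplicial sets}
\[
\bigl[\,n\mapsto\MC\bigl(\hom_\K(\Bass A,A')\cotimes\Omega_n\bigr)\,\bigr]\;\longrightarrow\;\bigl[\,n\mapsto\MC\bigl(\hom_\K(\Bass A,A'\otimes\Omega_n)\bigr)\,\bigr]
\]
is a weak equivalence, where in each simplicial degree one takes the \emph{plain set} of Maurer--Cartan elements of a different complete filtered dg Lie algebra. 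The filtered Goldman--Millson theorem (Dolgushev--Rogers) you invoke says something else: given a single filtered quasi-isomorphism $\g\to\h$ of complete filtered dg Lie algebras, the induced map $\MC_\bullet(\g)\to\MC_\bullet(\h)$, i.e.\ $\MC(\g\cotimes\Omega_\bullet)\to\MC(\h\cotimes\Omega_\bullet)$, is a weak equivalence. Here the target simplicial set is \emph{not} of the form $\MC(\h\cotimes\Omega_\bullet)$ for any single $\h$ (that failure is exactly the non-isomorphism you note), so the theorem does not apply; applying it degreewise only tells you that $\MC_\bullet$ of each algebra $\hom_\K(\Bass A,A')\cotimes\Omega_n$ and $\hom_\K(\Bass A,A'\otimes\Omega_n)$ agree, which says nothing about the simplicial set built from the plain $\MC$-sets as $n$ varies, since $\MC(-)$ itself is not homotopy invariant. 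And the general principle your argument would need --- that a levelwise filtered quasi-isomorphism of simplicial complete filtered dg Lie algebras induces a weak equivalence on levelwise $\MC$ --- is false: the inclusion of the constant simplicial object $\g$ into $\g\cotimes\Omega_\bullet$ is a levelwise filtered quasi-isomorphism, but levelwise $\MC$ gives the discrete set $\MC(\g)$ on one side and $\MC_\bullet(\g)$ on the other, which differ whenever two distinct Maurer--Cartan elements are gauge equivalent.

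The paper closes exactly this gap by quoting \cite[Theorem 4.1]{rnw19} (whose proof is recalled in \S\S\ref{rnw summary start}--\ref{rnw summary end}): one uses the Dupont contraction of $\Omega_\bullet$ onto the finite-dimensional cellular cochains $C_\bullet$, Getzler's gauge condition $s_\bullet=0$ and Bandiera's homotopy-transfer interpretation of it, so that both simplicial sets deformation retract onto gauge-fixed subspaces which can be identified with $\MC(\hom_\K(\Bass A,A')\otimes C_\bullet)$ and $\MC(\hom_\K(\Bass A,A'\otimes C_\bullet))$ respectively; these are honestly isomorphic because $C_n$ is finite dimensional. Your filtered quasi-isomorphism observation is correct as far as it goes (the map is indeed injective with quasi-isomorphic associated graded), but some argument of the Dupont/Getzler/Bandiera type, or an equivalent tower-plus-fibrancy argument, is indispensable to convert it into a statement about the levelwise Maurer--Cartan sets; as written, your proof asserts the needed equivalence rather than proving it.
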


\begin{rem}
	Versions of \cref{mappingspaces} are already in the literature, see in particular Berglund \cite[Proposition 6.1]{berglund2015rational}. But note the lack of boundedness or finite-dimensionality assumptions here. 
\end{rem}

\begin{rem}
	We expect the first equivalence $\rmap{\dga}(A, A')\simeq\MC_\bullet(\hom_\K(\Bass A, A'))$ to remain valid over fields of arbitrary characteristic, {cf.}\ \cite{DKW18}.
\end{rem}

\begin{para}\label{sketch of proof in intro}
	The proof of \cref{mainthm2} is now nearly complete: by \cref{criterion} and \cref{mappingspaces}, all that is left to do is to construct a certain left inverse to the map
	\[
	\hom_\K\left(\Bcom A, A'\right)\longrightarrow\hom_\K\left(\Bass A, A'\right).
	\]
	We will obtain it as follows. We can identify $\Bass A$ with the \emph{universal enveloping conilpotent coalgebra} of $\Bcom A$. By a dual version of the Poincar\'e--Birkhoff--Witt theorem, we can identify $\Bass A\cong\Sym^c(\Bcom A)$ as $\Bcom A$-comodules. From this, one can in particular construct a splitting of the projection $\Bass A \to \Bcom A$. 
\end{para}

\begin{para}Again we see that the arguments of this paper follow closely those of \cite{CPRNW19}, where it also turned out that the retraction needed to apply \cref{criterion} was obtained from the Poincar\'e--Birkhoff--Witt theorem.
\end{para}

\begin{para}
	As indicated in \S\ref{focus on non-unital case}, there are additional complications in the unital case: namely, when the algebras have units, we need to replace dg Lie algebras with \emph{curved} Lie algebras throughout, as we explain in \cref{sect unital}.
\end{para}

\begin{para}
	In our previous paper \cite{CPRNW19} we did not only study the forgetful functor $\cdga \to \dga$. A major portion of the paper concerned instead the \emph{Koszul dual} of this functor. Since associative algebras are Koszul self-dual, and commutative algebras are Koszul dual to Lie algebras, this Koszul dual is the universal enveloping algebra functor $\dgla \to \dga$. In \cite{CPRNW19} the results we obtained about the universal enveloping algebra functor were slightly weaker than those for the forgetful functor. Indeed, we could prove that $\ho\cdga\to\ho\dga$ is injective on isomorphism classes, but we could only prove that the universal enveloping algebra functor $\ho\dgla\to\ho\dga$ is injective on isomorphism classes up to \emph{derived completion} of the dg Lie algebras involved. By contrast, in this paper we are able to prove a ``Koszul dual'' of \cref{mainthm} with no additional completeness hypotheses. 
\end{para}

\begin{thm}\label{thm:B}
	Let $\g, \h$ be two dg Lie algebras over a field of characteristic zero. For any choice of basepoint, the natural map
	\[
	\rmap{\dgla}(\g, \h)\longrightarrow\rmap{\dga}(U\g, U\h)
	\]
	induces an injection on all homotopy groups.
\end{thm}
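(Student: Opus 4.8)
The plan is to reduce \cref{thm:B} to \cref{criterion}, following the pattern of the proof of \cref{mainthm2}. Concretely, I will model the two derived mapping spaces as Maurer--Cartan spaces of complete filtered dg Lie algebras, identify the natural map between them with the map on Maurer--Cartan spaces induced by a filtration-preserving morphism of dg Lie algebras, and then produce a filtered module retraction of that morphism. As in \cref{sketch of proof in intro} the retraction will come from the Poincar\'e--Birkhoff--Witt theorem; the difference is that whereas for the forgetful functor one needed the \emph{dual} PBW theorem ($\Bass A\cong\Sym^c(\Bcom A)$ as comodules), for the enveloping algebra functor one uses PBW in its classical form ($U\h\cong\Sym(\h)$ as $\h$-modules).

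On the Lie side I would use the analogue of \cref{mappingspaces}, namely $\rmap{\dgla}(\g,\h)\simeq\MC_\bullet(\hom_\K(\Blie\g,\h))$, where $\Blie\g$ is the Chevalley--Eilenberg (conilpotent, cocommutative) coalgebra of $\g$ and $\hom_\K(\Blie\g,\h)$ is the convolution dg Lie algebra, complete for the filtration induced by the coradical filtration of $\Blie\g$. On the associative side, $U\colon\dgla\to\dga$ is a left Quillen functor (its right adjoint, the commutator functor, preserves surjections and quasi-isomorphisms), and moreover $U$ preserves \emph{all} quasi-isomorphisms: over a field of characteristic zero the symmetrization map $\Sym(\g)\to U\g$ is an isomorphism of dg coalgebras, so $U\g\cong\Sym(\g)$ as chain complexes and the claim follows from the K\"unneth theorem together with the exactness of symmetric powers in characteristic zero. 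Consequently $U(\Coblie\Blie\g)$ is a cofibrant resolution of $U\g$ in $\dga$, and using that the cobar construction commutes with taking enveloping algebras, $U(\Coblie C)\cong\Cobass C$ for a conilpotent cocommutative coalgebra $C$ (regarded as coassociative), we obtain $\rmap{\dga}(U\g,U\h)\simeq\map_{\dga}(\Cobass\Blie\g,U\h)\simeq\MC_\bullet(\hom_\K(\Blie\g,U\h))$, again with the coradical filtration. Unwinding these identifications, the map of \cref{thm:B} is the one induced on Maurer--Cartan spaces by
\[
i\colon\hom_\K(\Blie\g,\h)\longrightarrow\hom_\K(\Blie\g,U\h),\qquad g\longmapsto j\circ g,
\]
postcomposition with the canonical dg Lie algebra inclusion $j\colon\h\hookrightarrow U\h$. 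Since $j$ is a morphism of dg Lie algebras and $i$ leaves the source coalgebra untouched, $i$ is a morphism of complete filtered dg Lie algebras.

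Now comes the retraction. By PBW, the symmetrization isomorphism $\omega\colon\Sym(\h)\xrightarrow{\ \sim\ }U\h$ is compatible with differentials and is $\h$-equivariant for the adjoint actions, so $p:=\bigl(\Sym(\h)\twoheadrightarrow\Sym^1(\h)=\h\bigr)\circ\omega^{-1}\colon U\h\to\h$ is a morphism of dg $\h$-modules with $p\circ j=\id_\h$. Set $r:=p_*\colon\hom_\K(\Blie\g,U\h)\to\hom_\K(\Blie\g,\h)$, $f\mapsto p\circ f$. Then $r$ preserves the coradical filtration, $r\circ i=(p\circ j)_*=\id$, and $r$ is a morphism of $\hom_\K(\Blie\g,\h)$-modules: the target is a module over itself via the bracket, $\hom_\K(\Blie\g,U\h)$ is a module over it via the bracket pulled back along $i$, and for $x\in\h$, $u\in U\h$ one has $[j(x),u]_{U\h}=x\cdot u$ (the adjoint action), so applying the $\h$-equivariant map $p$ gives $p(x\cdot u)=[x,p(u)]_\h$; inserting the convolution formula for the two brackets then yields $r\bigl([i(g),f]\bigr)=[g,r(f)]$. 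Hence the hypotheses of \cref{criterion} are satisfied, and we conclude that $\rmap{\dgla}(\g,\h)\to\rmap{\dga}(U\g,U\h)$ is injective on all homotopy groups at every basepoint.

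I expect the real work to be in the second step rather than in the retraction argument, which -- once PBW is invoked -- is essentially formal, just as in \cite{CPRNW19}. Specifically, one must check carefully that $\rmap{\dga}(U\g,U\h)$ is correctly modeled by $\MC_\bullet(\hom_\K(\Blie\g,U\h))$ \emph{with its coradical filtration}, so that the comparison $U(\Coblie\Blie\g)\cong\Cobass\Blie\g$ is an isomorphism of \emph{filtered} objects and the resulting map of Maurer--Cartan spaces really is the one coming from $i$ above; and one must verify in the graded setting, with the attendant Koszul signs, that the symmetrization map $\omega$ is both a chain map and $\h$-equivariant. None of this is expected to be deep, but it is where the care is needed.
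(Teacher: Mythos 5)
Your proposal is correct and follows essentially the same route as the paper: model both mapping spaces as $\MC_\bullet$ of the convolution algebras $\hom_\K(\Blie\g,\h)$ and $\hom_\K(\Blie\g,U\h)$ (using $U\Coblie\Blie\g\cong\Cobass\Blie\g$ and the fact that $U$ preserves quasi-isomorphisms by PBW), identify the map with postcomposition by $\h\to U\h$, and apply \cref{criterion} with the retraction obtained by postcomposing with the $\h$-module splitting $U\h\to\h$ coming from the classical PBW theorem. The only cosmetic differences are that the paper verifies cofibrancy of $\Cobass\Blie\g$ by noting it is triangulated rather than via $U$ being left Quillen, and it handles the completed-tensor-product comparison you flag as ``the real work'' by citing \cite[Theorem 4.1]{rnw19}, exactly as in \cref{thm:mapping space}.
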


\begin{para}
	This theorem is less striking than its commutative and associative counterpart \cref{mainthm} in that there is no longer a large difference in behavior between the classical (non-dg) and the homotopical situation. Indeed, consider the classical universal enveloping algebra functor from Lie algebras to associative algebras over a field of characteristic zero. This functor is clearly not full (consider e.g.\ $\g=\h$ a free Lie algebra). However, the universal enveloping algebra functor \emph{does} turn out to be faithful, but it is not automatic: it is a consequence of the Poincar\'e--Birkhoff--Witt theorem. This is also exactly what happens in the homotopical situation.
\end{para}

\begin{para}\label{par:final of intro}
	Some experts will perhaps not find anything surprising beyond this point: what remains of the paper is mostly to fill in details in the proof of \cref{mainthm2} sketched in \S\ref{sketch of proof in intro}. We first explain the dual form of the PBW theorem mentioned in \S\ref{sketch of proof in intro} in \cref{section: PBW}, and verify that the retraction obtained from dual PBW satisfies the conditions of \cref{criterion} in \cref{section: retraction}. After that we give the proof of \cref{mappingspaces} in \cref{section: mapping spaces}. In \cref{sect unital} we then treat the unital case, proving in particular a unital version of \cref{mappingspaces} involving curved Lie algebras. Finally, in Section \ref{sec:enveloping algebras} we consider the Koszul dual situation which concerns the universal enveloping algebra functor from dg Lie algebras to dg algebras.
\end{para}

\begin{para}
	We mostly follow the same conventions as in \cite{CPRNW19}. We always work over a field $\K$ of characteristic $0$. All the filtrations that we consider are concentrated in positive filtration degree.
\end{para}

\begin{para}
	As in \cite{CPRNW19} we systematically use the language of algebraic operads, and the results are obtained by studying the interplay between the operads $\mathsf{Lie}$, $\mathsf{Ass}$ and $\mathsf{Com}$. In fact, the only property of these operads that we end up using (besides their Koszulness) is that the natural morphism $\mathsf{Lie} \to \mathsf{Ass}$ admits a left inverse in the category of infinitesimal bimodules over the operad $\mathsf{Lie}$. The methods of this paper extend to show the following more general theorem, whose proof we leave to the interested reader:
\end{para}

\begin{thm}\label{generalization}
	Let $f \colon \P \to \Q$ be a morphism of binary Koszul operads in characteristic zero with $\P(n)$ and $\Q(n)$ finite dimensional for all $n$. Let $\Q^! \to \P^!$ be the induced morphism between the Koszul dual operads. Suppose that there exists a morphism of infinitesimal $\P$-bimodules $s \colon \Q \to \P$ such that $s \circ f = \mathrm{id}_\P$. Then both the forgetful functor from $\Q^!$-algebras to $\P^!$-algebras and the derived operadic pushforward functor $\mathbf L f_!$ from $\P$-algebras to $\Q$-algebras have the property that they induce injections on all homotopy groups of derived mapping spaces for all basepoints. 
\end{thm}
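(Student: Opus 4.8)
The plan is to deduce \cref{generalization} from \cref{criterion}, exactly as \cref{mainthm2} and \cref{thm:B} are, the only genuinely new ingredient being a version of \cref{mappingspaces} valid for an arbitrary binary Koszul operad of finite type. So the first step is to prove: if $\mathcal O$ is a binary Koszul operad with $\mathcal O(n)$ finite-dimensional for all $n$ and $A, A'$ are $\mathcal O$-algebras, then $\rmap{\mathcal O}(A,A') \simeq \MC_\bullet(\hom_\K(\mathrm B_{\mathcal O}A, A'))$, where $\mathrm B_{\mathcal O}A$ is the operadic bar construction (a conilpotent coalgebra over the Koszul dual cooperad of $\mathcal O$) and $\hom_\K(\mathrm B_{\mathcal O}A, A')$ is the associated convolution dg Lie algebra, complete filtered via the coradical filtration of $\mathrm B_{\mathcal O}A$. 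The proof is verbatim the one in \cref{section: mapping spaces}, which uses nothing about $\ass$ and $\com$ beyond Koszulness, finite type (which keeps bar--cobar and the relevant completions under control), and the Hinich model structure. One also records naturality in $\mathcal O$: a morphism of operads induces a morphism of bar constructions, hence of convolution dg Lie algebras, compatibly with these equivalences.

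Applying this produces, for each of the two functors, a morphism $i\colon \h\to\g$ of complete filtered dg Lie algebras modelling the relevant map of derived mapping spaces. For the forgetful functor, the operad morphism $\Q^!\to\P^!$ induces, for $\P^!$-algebras $A, A'$, a morphism of bar constructions $\mathrm B_{\Q^!}A\to\mathrm B_{\P^!}A$ (the bar construction of the restriction of $A$ along $\Q^!\to\P^!$), and the induced map of mapping spaces is modelled by the restriction map $i\colon\hom_\K(\mathrm B_{\P^!}A, A')\to\hom_\K(\mathrm B_{\Q^!}A, A')$. For $\mathbf L f_!$, the adjunction identifies $\rmap{\Q}(\mathbf L f_! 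P, \mathbf L f_! P')\simeq\rmap{\P}(P, f^*\mathbf L f_! P')$ and the comparison map $\rmap{\P}(P,P')\to\rmap{\P}(P, f^*\mathbf L f_! P')$ with the one induced by the derived unit; choosing a cofibrant replacement $QP'\xrightarrow{\ \sim\ }P'$, this becomes on convolution Lie algebras the postcomposition map $i\colon\hom_\K(\mathrm B_\P P, QP')\to\hom_\K(\mathrm B_\P P, f^* f_! QP')$ along the unit $QP'\to f^* f_! QP'$. In each case the complete filtration comes from the coradical filtration of a single bar construction, which appears as the source.

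One then needs, in each case, a retraction $r$ of $i$ which is a morphism of filtered $\h$-modules, so that \cref{criterion} applies; this is where the hypothesis on $s$ is used, through a Poincar\'e--Birkhoff--Witt theorem. For $\mathbf L f_!$ this is the easier case: operadic PBW, in the presence of the splitting $s\colon\Q\to\P$ of infinitesimal $\P$-bimodules, furnishes a retraction $\rho\colon f^* f_! QP'\to QP'$ of the unit which is a morphism of $QP'$-modules; then $r:=\rho_*$ retracts $i$, is $\h$-linear (the $\h$-action on $\g$ factors through $QP'\subseteq f^* f_! QP'$, over which $\rho$ is equivariant), and is filtered for free, since the filtration lives only on $\mathrm B_\P P$, which postcomposition with $\rho$ leaves untouched. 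For the forgetful functor the filtration is the crux: one splits $\mathrm B_{\Q^!}A\to\mathrm B_{\P^!}A$ by the linear inclusion $\sigma\colon\mathrm B_{\P^!}A\to\mathrm B_{\Q^!}A$ coming from the dual PBW isomorphism $\mathrm B_{\Q^!}A\cong\Sym^c(\mathrm B_{\P^!}A)$ of $\mathrm B_{\P^!}A$-comodules (the generalization of \cref{section: PBW}); then $r:=\sigma^*$ retracts $i$, is $\h$-linear since $\sigma$ is a comodule morphism, and is filtered because $\sigma$ respects the coradical filtrations. In both cases \cref{criterion} then yields the claimed injections on all homotopy groups at all basepoints, and the unital variants follow with curved Lie algebras just as in \cref{sect unital}.

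The main obstacle is this last point: arranging the dual PBW isomorphism compatibly with the coradical (arity) filtrations, so that $r$ is filtration-preserving. For the higher homotopy groups this is unnecessary --- a split injection in homology, which follows formally from $s$ without any filtration bookkeeping, already suffices, recovering the Amrani-type statement of \S\ref{amrani remark} --- but it is precisely what is needed to control $\pi_0$ and to make the inductive construction of $r$ converge. Establishing it amounts to transcribing \cref{section: retraction} with $\ass$, $\com$ replaced by $\Q$, $\P$, using that $f$ and $s$ are homogeneous for the arity grading; the remaining steps are routine adaptations of the arguments already in the paper.
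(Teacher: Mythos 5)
Since the paper leaves the proof of \cref{generalization} to the reader, there is no written argument to compare against; your proposal implements exactly the intended extension of \cref{section: PBW}, \cref{section: retraction}, \cref{section: mapping spaces} and \cref{sec:enveloping algebras}, and it is correct in outline. Two remarks. For the $\mathbf L f_!$ half you take a slightly different route than the one used for $U$ in \cref{enveloping lemma}: instead of exhibiting an explicit cofibrant replacement of the pushforward of the form $\Omega_{\Q}\mathrm B_{\P}P$ (the analogue of $\Cobass\Blie\g\cong U\Coblie\Blie\g$, which requires the analogue of \cite[Lemma 4.7]{CPRNW19} together with the fact that $f_!$ preserves quasi-isomorphisms, itself a PBW-type consequence of $s$), you pass through the simplicial Quillen adjunction $f_!\dashv f^*$ and identify the comparison map with postcomposition by the derived unit; this sidesteps both auxiliary statements at the cost of the routine checks that the adjunction is simplicial (restriction commutes with $-\otimes\Omega_n$) and that the map induced by a left Quillen functor corresponds to the unit, so it is a legitimate and arguably cleaner variant. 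In the forgetful-functor half, however, the phrase ``dual PBW isomorphism $\mathrm B_{\Q^!}A\cong\Sym^c(\mathrm B_{\P^!}A)$'' overstates what is available: the identification with $\Sym^c$ is special to $\lie\to\ass$, where $\Q\cong\com\circ\P$, whereas in general the hypothesis only gives $\Q\cong\P\oplus\ker(s)$ as infinitesimal $\P$-bimodules. What you actually have, and all you need, is obtained by dualizing $s$ arity-wise (this is where the finite-dimensionality of $\P(n)$ and $\Q(n)$ enters) to a right inverse of the map of Koszul dual cooperads as infinitesimal cobimodules, and then running \S\ref{comodule functoriality}--\S\ref{filtration and conilpotence} to get a filtration-preserving splitting of $\mathrm B_{\Q^!}A\to\mathrm B_{\P^!}A$ as $\mathrm B_{\P^!}A$-comodules; note that this step also uses the identification of $\mathrm B_{\Q^!}A$ with the relative cofree construction on $\mathrm B_{\P^!}A$ (the general analogue of $\Bass A\cong U^c\Bcom A$, i.e.\ of the dual of \cite[Lemma 4.7]{CPRNW19}), which deserves an explicit check of differentials but poses no difficulty. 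With the statement rephrased in terms of this splitting rather than a $\Sym^c$-isomorphism, your argument goes through as written.
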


\begin{para} \textbf{Acknowledgments.}
	We would like to thank Victor Roca i Lucio for discussions concerning his thesis and Joost Nuiten for discussions about curved Lie algebras. The first author was supported by the grant ANR-20-CE40-0016 HighAGT, the second author was supported by the grant ERC-2017-STG 759082 and a Wallenberg Academy Fellowship, and the fourth author was supported by the Dutch Research Organization (NWO) grant number VI.Veni.202.046.
\end{para}

\section{On the Poincar\'e--Birkhoff--Witt theorem}\label{section: PBW}

\begin{para}
	In this section we will explain some generalities around the Poincar\'e--Birkhoff--Witt theorem. Our perspective will be that the statement of the Poincar\'e--Birkhoff--Witt theorem is that there is an isomorphism of infinitesimal $\lie$-bimodules
	\begin{equation}\label{infbimod}\tag{$\dagger$}
		\ass \cong \com \circ \lie.
	\end{equation}
	We may also write the right hand side as  $\Sym(\lie)$, which makes the infinitesimal $\lie$-bimodule structure more transparent (cf.\ \cite[Remark~2.9]{CPRNW19}). In \cite[Section~2]{CPRNW19} we explained how to deduce \eqref{infbimod} from (an abstract form of) the usual PBW theorem, i.e.\ that for a Lie algebra $\g$ there is an isomorphism of $\g$-modules $U\g \cong \Sym(\g)$. We will now explain how to deduce the standard form of the PBW theorem from \eqref{infbimod}, and then explain that the exact same reasoning also implies the following dual form of the PBW theorem.
\end{para}

\begin{prop}\label{dualpbw}
	Let $L$ be a conilpotent dg Lie coalgebra, and denote by $U^c L$ its universal enveloping conilpotent coalgebra. There is an isomorphism of $L$-comodules $U^c(L) \cong \Sym^c(L)$.
\end{prop}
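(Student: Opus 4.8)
The plan is to run the argument dual to the classical deduction of PBW from the infinitesimal bimodule isomorphism \eqref{infbimod}, exactly as indicated in the paragraph preceding the statement. First I would recall the setup: a conilpotent dg Lie coalgebra $L$ is, by Koszul duality, the same data as a $\colie$-coalgebra, and its universal enveloping conilpotent coalgebra $\Uc L$ is the cofree conilpotent coassociative coalgebra equipped with the induced coalgebra structure — concretely, $\Uc$ is the left adjoint (in the conilpotent world) to the functor sending a conilpotent coassociative coalgebra to the associated conilpotent Lie coalgebra via antisymmetrization. The key formal fact is that both $\Uc(-)$ and $\Sym^c(-)$, as endofunctors of conilpotent dg Lie coalgebras valued in $L$-comodules, are obtained by ``coevaluating'' an infinitesimal $\lie$-bimodule against $L$: for $\ass$ this gives $\Uc L$, and for $\com\circ\lie = \Sym(\lie)$ this gives $\Sym^c(L)$. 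I would make this precise by recalling the cofree–conilpotent analogue of the construction that to an infinitesimal $\PP$-bimodule $M$ and a $\PP$-coalgebra $C$ assigns a $C$-comodule $M(C)$, functorially in $M$, and checking that it sends $\ass \mapsto \Uc L$ and $\Sym(\lie)\mapsto\Sym^c(L)$.

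Next I would invoke \eqref{infbimod}: the isomorphism $\ass\cong\com\circ\lie$ of infinitesimal $\lie$-bimodules is established (in the excerpt, cited from \cite[Section 2]{CPRNW19}) independently of any particular (co)algebra, so applying the coevaluation functor above term by term produces an isomorphism of $L$-comodules $\Uc L\cong\Sym^c(L)$. The point of phrasing PBW as the bimodule statement \eqref{infbimod} is precisely that it can be specialized both covariantly (to algebras, recovering $U\g\cong\Sym(\g)$) and contravariantly/conilpotently (to Lie coalgebras, yielding the present proposition), and in each case the specialization functor is exact and monoidal enough to transport the isomorphism.

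The main obstacle I anticipate is not the algebra but making the coevaluation against a conilpotent \emph{co}algebra rigorous without any finiteness hypotheses — one cannot simply dualize $L$ to a Lie algebra and quote the classical result, since $L$ need not be finite-dimensional. Thus I would need to define $\Uc L$ and $\Sym^c(L)$ directly as conilpotent coalgebras (the former via the cobar-type or cofree construction, the latter as the cofree conilpotent cocommutative coalgebra on $L$ with its canonical coassociative comultiplication arising from $\Sym(\lie)$), verify the infinitesimal bimodule coevaluation is well-defined in this generality, and confirm that it preserves isomorphisms. Once these conilpotent analogues are set up carefully, the isomorphism follows formally from \eqref{infbimod}, and the $L$-comodule structures match on the nose because they are both induced from the right $\lie$-action on $\ass\cong\Sym(\lie)$.
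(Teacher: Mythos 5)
Your proposal is correct and follows essentially the same route as the paper: dualize the infinitesimal $\lie$-bimodule isomorphism $\ass\cong\com\circ\lie$ arity-wise to an isomorphism of infinitesimal $\colie$-bicomodules $\coass\cong\cocom\circ\colie$ (so no finiteness hypothesis on $L$ is ever needed), and apply the functor $\EE\mapsto\EE\circ^{\colie}L$ from infinitesimal $\colie$-bicomodules to $L$-comodules, which sends $\coass\mapsto U^cL$ and $\cocom\circ\colie\mapsto\Sym^c(L)$ --- exactly your ``coevaluation''. Two peripheral slips worth fixing: $U^cL$ is not the cofree conilpotent coassociative coalgebra on $L$ (it is the equalizer $\coass\circ^{\colie}L$ sitting inside it), and $U^c$ is the \emph{right} adjoint, not the left adjoint, of the antisymmetrization functor from conilpotent coassociative coalgebras to conilpotent Lie coalgebras.
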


\begin{para}
	The material in this section is known to experts, but we do not know a reference which states it explicitly, and the perspective is perhaps somewhat novel. The original dual PBW theorem is due to Michaelis \cite{michaelis80, michaelis85}, who considers more generally any \emph{locally finite} Lie coalgebra, not necessarily conilpotent. The fact that the isomorphism \eqref{infbimod} can be promoted to an isomorphism of infinitesimal bimodules seems to have been first noticed by Griffin \cite{griffincomodules}. Other closely related works are Dotsenko--Tamaroff \cite{dotsenkotamaroff} and Fresse \cite[Section 10.2]{fressemodules}.
\end{para}

\begin{para}
	Let $\PP\to\QQ$ be a morphism of operads. The restriction functor from $\QQ$-algebras to $\PP$-algebras has a left adjoint, which we denote $\U_\QQ$. In the case of $\lie\to\ass$, the functor $\U_{\ass}$ is the usual universal enveloping algebra functor.
\end{para}

\begin{para}
	In general, we can identify algebras over an operad $\PP$ with  left $\PP$-modules concentrated in arity $0$. The functor $\U_\QQ$ makes sense for left $\PP$-modules as well, and in a sense it is more naturally considered as such. If $V$ is a left $\PP$-module, then $\U_\QQ V$ is the relative composition product
	\begin{equation}\label{coeq}\tag{$\ast$}
	\U_\QQ V\coloneqq\QQ\circ_\PP V = \mathrm{coeq}(\QQ \circ \PP \circ V \rightrightarrows \QQ \circ V),
	\end{equation}
	i.e.\ the coequalizer of two parallel arrows, one of which is given by the left $\PP$-module structure on $V$ and the other by the right $\PP$-module structure on $\QQ$ induced by the morphism $\PP\to\QQ$.
\end{para}

\begin{para}
	We may now reverse the perspective by fixing $V$ and taking \eqref{coeq} to define a functor of $\QQ$. We immediately notice that \eqref{coeq} makes no reference to the morphism $\PP\to\QQ$; it only involves the right $\PP$-module structure on $\QQ$. For simplicity, from now on we will now fix $V=A$ to be a $\PP$-algebra. Then, in differing levels of generality,\footnote{We remark that the fact that $\PP$-bimodules are neither more nor less general than infinitesimal $\PP$-bimodules is reflected in the fact that $\PP$-algebras are neither more nor less general than $A$-modules.} we may consider \eqref{coeq} as defining either:
	\begin{itemize}
		\item[$\diamond$] a functor $\QQ\longmapsto\U_\QQ A$ from right $\PP$-modules to chain complexes,
		\item[$\diamond$] a functor $\QQ\longmapsto\U_\QQ A$ from $\PP$-bimodules to $\PP$-algebras, or
		\item[$\diamond$] a functor $\QQ\longmapsto\U_\QQ A$ from infinitesimal $\PP$-bimodules to $A$-modules.
	\end{itemize}
	The last bullet point is the least obvious here. An infinitesimal left $\PP$-module structure on $\QQ$ consists of a map
	\[
	\PP\circ(\PP;\QQ)\to\QQ
	\]
	satisfying a suitable associativity condition, with notation as in \cite[Section~6.1]{lodayvallette}. Applying the relative composition product $(-)\circ_\PP A$ to both sides, this yields a map
	\[
	\PP\circ(A;\U_\QQ A)\longrightarrow\U_\QQ A
	\]
	defining an $A$-module structure on $\U_\QQ A$.
\end{para}

\begin{para}\label{pbw deduction}
	Specializing this to $\lie\to \ass$ and using \eqref{infbimod}, we have that for any dg Lie algebra $\g$
	\[
	U\g = \U_{\ass}\g\cong\U_{\com\circ\lie}\g=\com\circ\lie\circ_{\lie}\g\cong\com\circ\g=\Sym(\g)
	\]
	as $\g$-modules, recovering the classical PBW theorem.
\end{para}

\begin{para}\label{comodule functoriality}
	We can now dualize the whole discussion. All cooperads, coalgebras, and comodules are implicitly assumed to be conilpotent in what follows. Suppose that $\EE\to\DD$ is a morphism of cooperads. In this situation, there is a natural restriction functor from $\EE$-coalgebras to $\DD$-coalgebras. Dually to the case of operads, it has a \emph{right} adjoint, which we will denote $\Uc_\EE$. A $\DD$-coalgebra is a left $\DD$-comodule concentrated in arity $0$, and we may define $\Uc_\EE M$ more generally for any left $\DD$-comodule $M$ by the formula
	\[
	\Uc_\EE M\coloneqq \EE\circ^\DD M,
	\]
	where the right-hand side is the equalizer of the diagram
	\[
	\EE\circ M\rightrightarrows\EE\circ\DD\circ M
	\]
	dual to \eqref{coeq}. Again we fix a $\DD$-coalgebra $C$, and consider the functor $\EE \mapsto \Uc_\EE C$. This may be considered as a functor from right $\DD$-comodules to chain complexes, from $\DD$-bicomodules to $\DD$-coalgebras, or from infinitesimal $\DD$-bicomodules to $C$-comodules.
\end{para}

\begin{para}
	In the special case of the morphism $\coass\to\colie$, the functor $\Uc_{\coass} = U^c$ is the universal enveloping conilpotent coalgebra. Taking the linear dual in \eqref{infbimod} shows that there is an isomorphism of infinitesimal $\colie$-comodules
	\begin{equation}\label{infcobimod}
		\coass \cong \cocom \circ \colie.\tag{$\dagger\dagger$}
	\end{equation}
	Dualizing the argument of \S\ref{pbw deduction} in the evident way proves \cref{dualpbw}. \end{para}

\section{Construction of a retraction}\label{section: retraction}

\begin{para}
	We write $\hom_\K$ for the internal hom of chain complexes over $\K$. If $C$ is a dg coalgebra and $A$ is a dg algebra, then $\hom_\K(C, A)$ has a natural dg algebra structure defined by the \emph{convolution product}. If we write the coproduct of $C$ using Sweedler notation
	\[
	\Delta(x) = \sum x_{(1)}\otimes x_{(2)},
	\]
	then the convolution product of $f, g\in\hom_\K(C, A)$ is defined as
	\[
	(f \ast g)(x)\coloneqq\sum f(x_{(1)})g(x_{(2)})
	\]
	for $x\in C$. This is readily verified to be associative. By the same formulas, if $M$ is a right $C$-comodule and $N$ is a right $A$-module, then $\hom_\K(M, N)$ is functorially a right $\hom_K(C, A)$-module.
\end{para}

\begin{para}\label{lie functoriality}
	In a similar way, if $L$ is a dg Lie coalgebra and $A$ is a commutative dg algebra, then $\hom_\K(L, A)$ is a dg Lie algebra. Again using Sweedler's notation for the Lie cobracket of $L$, the bracket of $f, g\in\hom_\K(L, A)$ is given by
	\[
	[f, g](x)\coloneqq\sum f(x_{(1)})g(x_{(2)})
	\]
	for $x\in L$. If $M$ is an $L$-comodule and $N$ an $A$-module, then $\hom_\K(M,N)$ is functorially a $\hom_\K(L,A)$-module.
\end{para}

\begin{para}\label{filtration and conilpotence}
	If $\DD$ is a reduced cooperad, then any $\DD$-coalgebra comes equipped with a canonical filtration as follows. The cooperad $\DD$ has a natural increasing filtration by arity
	\[
	0 = \tau_{\leq0}\DD\xhookrightarrow{\quad}\tau_{\leq1}\DD\xhookrightarrow{\quad}\tau_{\leq2}\DD\xhookrightarrow{\quad}\cdots
	\]
	where $\tau_{\leq k}\DD$ denotes the truncation of $\DD$ at arity $k$, i.e.\ the cooperad obtained by setting all operations of arity above $k$ to zero. Each of the truncations is a $\DD$-cobimodule in a natural way. If $C$ is a $\DD$-coalgebra, then $C\cong\DD\circ^\DD C$ inherits a filtration given by $\tau_{\leq k}\DD\circ^\DD C$ in filtration degree $k-1$, called the \emph{coradical filtration} of $C$. The $\DD$-coalgebra $C$ is said to be \emph{conilpotent} if this filtration is exhaustive. With this description of the coradical filtration, it becomes obvious that if $\EE\to\DD$ is a morphism of reduced cooperads, then the natural map
	\[
	\Uc_\EE C = \EE\circ^\DD C\longrightarrow \DD\circ^\DD C \cong C
	\]
	is compatible with the respective coradical filtrations, where $\Uc_\EE C$ is endowed with the filtration induced by the arity filtration of $\EE$. More generally, if $\EE_1\to\EE_2$ is a morphism of right $\DD$-comodules, $\DD$-bicomodules, or infinitesimal $\DD$-bicomodules, then the induced morphism
	\[
	\Uc_{\EE_1}C\longrightarrow\Uc_{\EE_2}C
	\]
	is compatible with the filtrations induced by the arity filtrations of $\EE_1$ and $\EE_2$.
\end{para}

\begin{prop}\label{retraction}
	Let $L$ be a conilpotent dg Lie coalgebra, let $U^c L$ be its universal enveloping conilpotent coalgebra, and let $A$ a commutative dg algebra. We consider the map of complete filtered dg Lie algebras
	\[
	\hom_\K(L,A) \longrightarrow \hom_\K(U^c L,A)
	\]
	induced by the projection $U^c L \to L$, where both sides are filtered by the respective coradical filtrations. This map admits a filtration-preserving left inverse which is a morphism of $\hom_\K(L,A)$-modules.
\end{prop}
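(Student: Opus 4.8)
The plan is to produce the left inverse by applying the functor $\Uc_{(-)}L$ of \S\ref{comodule functoriality} (more precisely its $\hom_\K(-,A)$-dual incarnation) to the dual PBW splitting, and then to check the module-compatibility by hand. Concretely, \cref{dualpbw} (in its infinitesimal-bicomodule strengthening \eqref{infcobimod}) gives an isomorphism $\coass \cong \cocom\circ\colie$ of infinitesimal $\colie$-bicomodules. The projection $\coass\to\colie$ corresponds, under this isomorphism, to the map $\cocom\circ\colie\to\colie$ that is the counit $\cocom\to\II$ composed with... well, it is the canonical map picking out the linear part $\colie\subseteq\cocom\circ\colie$. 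This canonical inclusion $\colie\hookrightarrow\cocom\circ\colie\cong\coass$ is a morphism of infinitesimal $\colie$-bicomodules (it is a section of the projection), and it is split by the projection. So at the level of cooperads we have a morphism $s\colon\colie\to\coass$ of infinitesimal $\colie$-bicomodules with $\pi\circ s=\id_\colie$, where $\pi\colon\coass\to\colie$ is the natural projection.

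Next I would feed this into the functoriality of \S\ref{comodule functoriality} applied to the $\colie$-coalgebra $C=L$: from the infinitesimal-$\colie$-bicomodule map $s\colon\colie\to\coass$ we obtain a map of $L$-comodules
\[
\Uc_{\colie}L=L\longrightarrow \Uc_{\coass}L=U^cL,
\]
which is a section of the canonical projection $U^cL\to L$. Now I dualize: applying $\hom_\K(-,A)$ (which by \S\ref{lie functoriality} sends $L$-comodules to $\hom_\K(L,A)$-modules, and which is contravariant) turns the section $L\to U^cL$ into a retraction
\[
r\colon\hom_\K(U^cL,A)\longrightarrow \hom_\K(L,A)
\]
that is a morphism of $\hom_\K(L,A)$-modules, and that satisfies $r\circ i=\id$ where $i\colon\hom_\K(L,A)\to\hom_\K(U^cL,A)$ is the map induced by $U^cL\to L$. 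Finally, the filtration-compatibility: by \S\ref{filtration and conilpotence}, the map of $L$-comodules $\Uc_{\colie}L\to\Uc_{\coass}L$ induced by the infinitesimal-bicomodule morphism $\colie\to\coass$ is compatible with the filtrations induced by the arity filtrations, hence the dual map $r$ is filtration-preserving for the coradical filtrations — this is exactly the statement we need for \cref{criterion}.

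The one genuinely substantive point, and the place where I expect the main obstacle to lie, is verifying that the splitting $s\colon\colie\to\coass$ really is a morphism of \emph{infinitesimal} $\colie$-bicomodules and not merely of right (or left) $\colie$-comodules, since it is precisely the two-sided infinitesimal structure that gets converted into the $\hom_\K(L,A)$-module structure on the retraction under \S\ref{comodule functoriality}. This is where one must unwind the identification $\coass\cong\cocom\circ\colie\cong\Sym^c(\colie)$ and the explicit description of its infinitesimal $\colie$-bicomodule structure (cf.\ \cite[Remark~2.9]{CPRNW19} and \cite{griffincomodules}): the inclusion of the ``linear part'' $\colie$ into $\Sym^c(\colie)$ must be seen to be compatible with the infinitesimal coactions on both sides. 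Granting that — which is dual to the observation in \cite{CPRNW19} that the unit $\lie\to\ass$ and the projection $\ass\cong\Sym(\lie)\to\lie$ are infinitesimal $\lie$-bimodule maps — everything else is formal manipulation of adjunctions and linear duals, so I would keep the writeup of those parts brief.
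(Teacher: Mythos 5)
Your proposal is correct and follows essentially the same route as the paper's own proof: use the infinitesimal-bicomodule form \eqref{infcobimod} of dual PBW to split $\coass\to\colie$, apply the functor $\Uc_{(-)}L$ of \S\ref{comodule functoriality} to obtain an $L$-comodule section of $U^cL\to L$, then dualize via $\hom_\K(-,A)$ and invoke \S\ref{lie functoriality} and \S\ref{filtration and conilpotence} for the module and filtration compatibility. The point you flag as the substantive one (that the linear-part splitting of $\Sym^c(\colie)$ is a map of \emph{infinitesimal} $\colie$-bicomodules) is exactly what the paper absorbs into the statement \eqref{infcobimod}, dual to \cite[Remark~2.9]{CPRNW19}, so nothing is missing.
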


\begin{proof}
	The dual PBW theorem in the form \eqref{infcobimod} shows that the projection
	\[
	\coass\longrightarrow\colie 
	\]
	admits a right inverse 
	\[\colie \to \coass
	\] as a morphism of infinitesimal $\colie$-bicomodules. Such a right inverse induces a map
	\[ 
	L = \colie\circ^\colie L \longrightarrow  \coass\circ^\colie L = U^c L ,
	\]
	which is right inverse to the projection $U^c L \to L$. 
	The discussion of \S\ref{comodule functoriality} shows that this is a morphism of $L$-comodules. Applying $\hom_\K(-,A)$ gives a map
	\[
	\hom_\K(U^cL, A)\longrightarrow\hom_\K(L, A),
	\]
	which is left inverse to $\hom_\K(L,A) \longrightarrow \hom_\K(U^c L,A)$, and it is straightforward to see from \S\ref{lie functoriality} that it is a morphism of  $\hom_\K(L,A)$-modules.  The discussion of \S\ref{filtration and conilpotence} shows that this retraction is compatible with the filtrations, where the convolution algebras inherit filtrations from the coradical filtrations of the coalgebras.
\end{proof}

\section{Models for mapping spaces of algebras}\label{section: mapping spaces}

\begin{para}\label{hinich model structure}
	Hinich \cite{hinichmodelstructure} has constructed a simplicial model structure on unbounded dg algebras over an operad $\PP$ over a field $\K$ of characteristic zero. This model structure is right transferred from the model structure on unbounded chain complexes along the forgetful functor. The resulting weak equivalences are the morphisms of $\PP$-algebras that are quasi-isomorphisms of chain complexes, and the fibrations are the morphisms that are degree-wise surjections. Any triangulated algebra \cite[Section B.6.7]{lodayvallette} is cofibrant. In particular, if $\PP$ is a Koszul operad, then a canonical cofibrant replacement functor is given by the bar-cobar resolution. The simplicial enrichment is given by
	\[
	\map{\PP}(A, A')\coloneqq\hom_{\PP\text{-}\mathsf{alg}}(A, A'\otimes\Omega_\bullet)\in\mathsf{sSet},
	\]
	where $\Omega_\bullet$ is Sullivan's algebra of polynomial differential forms on the simplices.
\end{para}

\begin{para}
	For a complete filtered dg Lie algebra $\g$, Hinich \cite{hinich97} has also introduced the \emph{Maurer--Cartan simplicial set}
	\[
	\MC_\bullet(\g)\coloneqq\MC(\g \cotimes \Omega_\bullet),
	\]
	where $\g \cotimes \Omega_\bullet \coloneqq \varprojlim( \g/F^n \g \otimes \Omega_\bullet) $. It is a non-trivial result that this simplicial set is in fact a Kan complex. Note in particular that $\MC_0(\g)=\MC(\g)$. Since $\g$ has a complete filtration, we may also exponentiate $\g_0$ to get a \emph{gauge group} acting on $\MC(\g)$, and two Maurer--Cartan elements of $\g$ lie in the same connected component of $\MC_\bullet(\g)$ if and only if they are gauge equivalent. 
\end{para}

\begin{para}
	We can now prove \cref{mappingspaces} from the introduction.
\end{para}

\begin{prop}[\ref{mappingspaces}]\label{thm:mapping space}
	Let $A$ and $A'$ be dg algebras over a field $\K$ of characteristic zero. Then:
	\begin{itemize}
		\item[$\diamond$] $\rmap{\dga}(A, A')\simeq\MC_\bullet(\hom_\K(\Bass A, A'))$.
		\item[$\diamond$] If $A$ and $A'$ are commutative, then $\rmap{\cdga}(A, A')\simeq\MC_\bullet(\hom_\K(\Bcom A, A'))$.
	\end{itemize}
	Here, both $\hom_\K(\Bass A, A')$ and $\hom_\K(\Bcom A, A')$ are considered as complete filtered dg Lie algebras, with the filtrations induced by the coradical filtrations of $\Bass A$ and $\Bcom A$ respectively. In the commutative case, the projection $\Bass A\to \Bcom A$ induces the natural map $\rmap{\dga}(A, A')\longrightarrow\rmap{\cdga}(A, A')$.
\end{prop}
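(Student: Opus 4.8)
The plan is to model both derived mapping spaces as Maurer--Cartan spaces via the bar--cobar resolution and the convolution description of twisting morphisms, and then to compare these models through a filtered quasi-isomorphism; the point to watch is that, with no boundedness hypotheses, the two models are genuinely \emph{different} simplicial sets that are merely weakly equivalent. \emph{Step 1 (cofibrant replacement).} First I would use that $\ass$ and $\com$ are Koszul over $\K$, so that $\Cobass\Bass A\to A$ and $\Coblie\Bcom A\to A$ are quasi-isomorphisms with triangulated — hence, in the Hinich model structure of \S\ref{hinich model structure}, cofibrant — source; since all objects there are fibrant this gives $\rmap{\dga}(A,A')\simeq\map{\dga}(\Cobass\Bass A,A')$ and $\rmap{\cdga}(A,A')\simeq\map{\cdga}(\Coblie\Bcom A,A')$. \emph{Step 2 (twisting morphisms).} By the universal property of the cobar construction, for a dg algebra $B$ the set $\hom_{\dga}(\Cobass\Bass A,B)$ is the set of Maurer--Cartan elements of the convolution dg algebra $\hom_\K(\Bass A,B)$ of \cref{section: retraction}, and for a commutative dg algebra $B$ the set $\hom_{\cdga}(\Coblie\Bcom A,B)$ is the set of Maurer--Cartan elements of the convolution dg Lie algebra $\hom_\K(\Bcom A,B)$ of \S\ref{lie functoriality}. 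Applying this with $B=A'\otimes\Omega_\bullet$ identifies the mapping spaces of Step 1 with the simplicial sets $m\mapsto\MC(\hom_\K(\Bass A,A'\otimes\Omega_m))$ and $m\mapsto\MC(\hom_\K(\Bcom A,A'\otimes\Omega_m))$.

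\emph{Step 3 (the comparison is a filtered quasi-isomorphism).} Next I would observe that there is a natural morphism of complete filtered dg Lie algebras
\[
\hom_\K(\Bass A,A')\cotimes\Omega_m\longrightarrow\hom_\K(\Bass A,A'\otimes\Omega_m)
\]
(and likewise with $\Bcom A$ in place of $\Bass A$), the left-hand side carrying the completed coradical filtration and the right-hand side the filtration induced by the coradical filtration of $\Bass A$. I claim this is a filtered quasi-isomorphism: on the associated graded in coradical weight $k$ it is the canonical map $\hom_\K(W_k,A')\otimes\Omega_m\to\hom_\K(W_k,A'\otimes\Omega_m)$, where $W_k$ is the weight-$k$ layer of $\Bass A$ (resp.\ $\Bcom A$), and this is a quasi-isomorphism because $\hom_\K(W_k,-)$ and $(-)\otimes\Omega_m$ are exact functors on chain complexes of $\K$-vector spaces and $\K\to\Omega_m$ is a quasi-isomorphism, so that both sides are, compatibly with the comparison map, quasi-isomorphic to $\hom_\K(W_k,A')$. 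This map is \emph{not} an isomorphism once the $W_k$ are infinite-dimensional, which is exactly why the two models of the mapping space differ a priori.

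\emph{Step 4 (passing to Maurer--Cartan spaces).} Finally I would deduce that the maps of Step 3 induce weak equivalences on levelwise Maurer--Cartan sets; combined with Steps 1--2, and with the observation that the projection $\Bass A\to\Bcom A$ induces, through the convolution structures, precisely the natural map between the two Maurer--Cartan models, this yields the proposition. Since $\MC$ commutes with the inverse limit defining the coradical completion, both the source and the target simplicial sets are inverse limits of towers of Maurer--Cartan spaces of the nilpotent quotients by the coradical filtration, and these towers are towers of Kan fibrations (each layer being a square-zero extension, by the Hinich--Getzler argument). One then compares stage by stage, inducting on nilpotency length: the abelian stage is the homotopy invariance of the derived mapping space of chain complexes applied to the quasi-isomorphism of Step 3, and the inductive step follows by comparing the long exact sequences of homotopy groups of the two fibration towers — this is the point at which the Goldman--Millson/Dolgushev--Rogers-type invariance of Maurer--Cartan spaces under filtered quasi-isomorphisms is used.

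\emph{Main obstacle.} I expect the hard part to be Step 4, that is, transporting the filtered quasi-isomorphism of Step 3 through the simplicial variable $\Omega_\bullet$ in the absence of finiteness hypotheses: there $\hom_\K(\Bass A,A')\cotimes\Omega_m$ is a \emph{proper} complete filtered dg Lie subalgebra of $\hom_\K(\Bass A,A'\otimes\Omega_m)$, so — unlike in the bounded or finite-type situations of, e.g., Berglund's Proposition~6.1 — one cannot identify the two models on the nose, and must argue directly that the inclusion induces a weak equivalence of Maurer--Cartan spaces.
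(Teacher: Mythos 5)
Your Steps 1--3 coincide with the paper's own argument: the bar--cobar resolution is the cofibrant replacement used, the identification of $\hom_{\dga}(\Cobass\Bass A, A'\otimes\Omega_n)$ with $\MC(\hom_\K(\Bass A,A'\otimes\Omega_n))$ is exactly the twisting-morphism bijection of \cite[Theorem 11.3.1]{lodayvallette}, and your Step 3 observation that the canonical map $\hom_\K(\Bass A,A')\cotimes\Omega_n\to\hom_\K(\Bass A,A'\otimes\Omega_n)$ is a quasi-isomorphism on the associated graded of the coradical filtration is correct. (Minor quibble: the commutative bar--cobar resolution is not $\Coblie\Bcom A$ in the paper's notation, since $\Coblie$ there takes cocommutative coalgebras to Lie algebras.) At this point the paper simply cites \cite[Theorem 4.1]{rnw19}, and your Step 4 is an attempt to prove that statement; this is where there is a genuine gap.

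The right-hand simplicial set $\MC(\hom_\K(\Bass A,A'\otimes\Omega_\bullet))$ is \emph{not} of the form $\MC_\bullet(\mathfrak h)=\MC(\mathfrak h\cotimes\Omega_\bullet)$ for any complete filtered dg Lie or $\Loo$-algebra $\mathfrak h$ --- the simplicial variable sits inside the hom --- so the Goldman--Millson/Dolgushev--Rogers theorem cannot be invoked for this comparison, and "the Hinich--Getzler argument" you appeal to for Kan-ness of the tower stages and for the tower maps being Kan fibrations is only available in the literature for simplicial Lie algebras of the form $\mathfrak h\otimes\Omega_\bullet$ (or $\mathfrak h\cotimes\Omega_\bullet$). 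Those assertions, together with the identification of the homotopy groups of the abelian stages of the right-hand tower with $H_{\ast-1}(\hom_\K(\mathrm{gr}_k\Bass A,A'))$, are precisely the content that has to be proved; the filtered quasi-isomorphism of Step 3 by itself does not feed into any quotable black box. The gap is reparable: either redo the Hinich-type extension lemmas for these convolution-type simplicial Lie algebras (using that $\Omega(\Delta^n)\to\Omega(\Lambda^n_k)$ is a degreewise split surjection with contractible kernel, a property that survives applying $\hom_\K(\Bass A,A'\otimes-)$ filtration-stage by filtration-stage), or argue as in \cite{rnw19} --- recalled in \S\S\ref{rnw summary start}--\ref{rnw summary end} of the paper --- via Getzler's Dupont gauge condition \cite{getzler09mcgroupoid} and Bandiera's homotopy-transfer interpretation \cite{bandiera2017descent}, which reduces both sides to Maurer--Cartan sets of tensor products with the finite-dimensional complexes $C_n$, where the comparison map is an honest isomorphism. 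As written, however, Step 4 asserts rather than establishes the key equivalence.
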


\begin{proof}
	Both assertions have essentially the same proof, so we will present only the demonstration of the first one. The simplicial set $\rmap{\dga}(A, A')$ depends on a choice of cofibrant replacement functor. We choose the bar-cobar resolution $\Cobass\Bass$. In simplicial degree $n$, the left hand side is given by
	\[
	\rmap{\dga}(A, A')_n = \hom_{\dga}\left(\Cobass\Bass A, A'\otimes\Omega_n\right),
	\]
	which by \cite[Theorem~11.3.1]{lodayvallette} is the set of Maurer--Cartan elements of the dg Lie algebra
	\[
	\hom_\K\left(\Bass A,A'\otimes\Omega_n\right).
	\]
	Thus, we need to compare the two simplicial sets
	\[\MC\left(\hom_\K\left(\Bass A,A'\right) \cotimes \Omega_\bullet\right) \qquad\text{and}\qquad
	\MC\left(\hom_\K\left(\Bass A,A'\otimes\Omega_\bullet\right)\right).
	\]The  natural inclusion $\hom_\K\left(\Bass A,A'\right)\otimes\Omega_\bullet\xhookrightarrow{\quad}\hom_\K\left(\Bass A,A'\otimes\Omega_\bullet\right)$ is a morphism of filtered modules when both sides are filtered by the coradical filtration of $\Bass A$. Since the filtration on the right hand side is complete, this map factors canonically through the completion, giving a map
	\[
	\hom_\K\left(\Bass A,A'\right)\cotimes\Omega_\bullet\xhookrightarrow{\quad}\hom_\K\left(\Bass A,A'\otimes\Omega_\bullet\right)
	\]
	which in general is not an isomorphism. Nevertheless, it induces a homotopy equivalence of Maurer--Cartan spaces by \cite[Theorem 4.1]{rnw19}.
	\end{proof}\begin{rem}We remark that \cite[Theorem 4.1]{rnw19} does not explicitly consider the completed tensor product, but the completed tensor product is implicitly used in the arguments. We will discuss the proof of \cite[Theorem 4.1]{rnw19} in more detail in \S\S\ref{rnw summary start}--\ref{rnw summary end}.
\end{rem}

\begin{rem}
	The model for mapping spaces constructed in \cref{thm:mapping space} is evidently functorial in both $A$ and $A'$, defining a bifunctor from $\dga^{\mathrm{op}} \times \dga$ (resp.\ $\cdga^{\mathrm{op}} \times \cdga$) to Kan complexes. In fact more is true: it is also functorial with respect to $\Aoo$-morphisms, resp.\ $\Coo$-morphisms, in both arguments. Indeed, it is shown in \cite{rnw-convolution} that the convolution algebra construction $\hom_\K(-,-)$ is functorial with respect to $\infty$-morphisms in either of its two arguments, but not simultaneously. But an $\infty$-morphism of algebras induces a strict morphism between bar constructions, so  $\hom_\K(\Bass A, A')$ is in fact simultaneously functorial with respect to $\infty$-morphisms in both $A$ and $A'$. It straightforward to check that the quasi-isomorphisms of \cref{mappingspaces} are natural in both slots.
\end{rem}

\begin{rem}
	The same model
	\[
	\rmap{\PP}(A, A')\simeq\MC_\bullet\left(\hom_\K(\Bar_\PP A, A')\right)
	\]
	works for algebras over any Koszul binary operad, by the same argument. It also works outside of the binary case, with the difference that the convolution algebra is an $\Loo$-algebra in that case, instead of a dg Lie algebra \cite[Section 7]{WierstraAlgHop}.
\end{rem}

\begin{para}
	We can now prove the non-unital case of \cref{mainthm2} from the introduction.
\end{para}

\begin{thm}[\ref{mainthm2}]
	Over a base field of characteristic zero, the natural map
	\[
	\rmap{\cdga}(A, A')\longrightarrow\rmap{\dga}(A, A')
	\]
	induced by the forgetful functor induces an injection on all homotopy groups for any $A, A'\in\cdga$ and any basepoint.
\end{thm}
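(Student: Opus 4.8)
The plan is to assemble the three ingredients prepared above: the model for derived mapping spaces of \cref{thm:mapping space}, the retraction coming from the dual Poincar\'e--Birkhoff--Witt theorem in \cref{retraction}, and the criterion \cref{criterion} borrowed from \cite{CPRNW19}. First I would use \cref{thm:mapping space} to replace both sides by Maurer--Cartan spaces: there are weak equivalences
\[
\rmap{\cdga}(A, A') \simeq \MC_\bullet\bigl(\hom_\K(\Bcom A, A')\bigr) \qquad\text{and}\qquad \rmap{\dga}(A, A') \simeq \MC_\bullet\bigl(\hom_\K(\Bass A, A')\bigr),
\]
under which the map induced by the forgetful functor corresponds to the map of Maurer--Cartan spaces induced by the morphism of complete filtered dg Lie algebras
\[
\hom_\K(\Bcom A, A') \longrightarrow \hom_\K(\Bass A, A')
\]
obtained by precomposition with the canonical projection $\Bass A \to \Bcom A$, where the filtrations are those induced by the coradical filtrations. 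Thus it suffices to prove that this morphism induces an injection on all homotopy groups of Maurer--Cartan spaces, at every basepoint.

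Next I would recognize this morphism as an instance of \cref{retraction}. The Koszul dual Lie coalgebra $L \coloneqq \Bcom A$ is a conilpotent dg Lie coalgebra whose universal enveloping conilpotent coalgebra is the bar construction, $U^c(\Bcom A) \cong \Bass A$ (this is the identification recalled in \S\ref{sketch of proof in intro}, and under it the projection $U^c L \to L$ is the projection $\Bass A \to \Bcom A$). Applying \cref{retraction} with this $L$ and the commutative dg algebra $A'$ then provides a filtration-preserving left inverse
\[
r \colon \hom_\K(\Bass A, A') \longrightarrow \hom_\K(\Bcom A, A')
\]
which is moreover a morphism of $\hom_\K(\Bcom A, A')$-modules. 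Finally I would feed this into \cref{criterion} with $\h = \hom_\K(\Bcom A, A')$ and $\g = \hom_\K(\Bass A, A')$: its hypotheses are exactly that $\h$ and $\g$ be complete filtered dg Lie algebras, that $r \circ i = \id_\h$ for $i$ the map above, and that $r$ be a morphism of filtered $\h$-modules --- all of which hold --- so $\MC_\bullet(\h) \to \MC_\bullet(\g)$ is injective on all homotopy groups for every basepoint. Transporting this along the equivalences of \cref{thm:mapping space} yields the theorem.

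I do not expect any serious obstacle at this stage: the entire difficulty of the argument has been front-loaded into \cref{criterion} and into the dual PBW theorem (\cref{dualpbw}), both of which we are entitled to assume. What remains is essentially bookkeeping, and the two points worth checking carefully are: that the left inverse produced by \cref{retraction} is genuinely a left inverse of the specific morphism appearing in \cref{thm:mapping space} (which comes down to compatibility of the identification $\Bass A \cong U^c(\Bcom A)$ with the respective projection maps), and that the several filtrations in play --- the coradical filtrations on $\Bass A$ and $\Bcom A$, the filtrations they induce on the convolution Lie algebras, and the filtrations used in the hypotheses of \cref{criterion} --- all coincide. Both are immediate from \S\ref{filtration and conilpotence} and the discussion of \cref{section: PBW}.
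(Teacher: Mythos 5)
Your proposal is correct and follows exactly the paper's own argument: model both mapping spaces via \cref{thm:mapping space}, identify $\Bass A \cong U^c(\Bcom A)$ so that \cref{retraction} supplies the filtered, $\hom_\K(\Bcom A,A')$-linear left inverse, and conclude with \cref{criterion}. The two checkpoints you flag (compatibility of the identification with the projections, and agreement of the coradical filtrations) are precisely the points the paper disposes of by citing the dual of \cite[Lemma 4.7]{CPRNW19} and \S\ref{filtration and conilpotence}.
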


\begin{proof}
	By \cref{thm:mapping space} we can model $\rmap{\cdga}(A,A')\to\rmap{\dga}(A,A')$ by applying $\MC_\bullet$ to the morphism of complete filtered dg Lie algebras $\hom_\K(\Bcom A,A') \to \hom_\K(\Bass A,A')$. By \cref{criterion} we are done if we can construct a filtered retraction from $\hom_\K(\Bass A,A')$ onto $\hom_\K(\Bcom A,A') $ as filtered $\hom_\K(\Bcom A,A') $-modules. This is exactly what is given by \cref{retraction}, since $\Bass A$ is canonically isomorphic to $ U^c \Bcom A$ (this is well known, and can be proven by dualizing the argument of \cite[Lemma 4.7]{CPRNW19}).
\end{proof}

\section{The unital case}\label{sect unital}

\begin{para}
	In this section, we show the unital version of Theorem \ref{mainthm}. Recall that we denote by $\udga$ (resp.\ $\ucdga$) the categories of unital associative (resp.\ commutative) dg algebras over our fixed base field $\K$.
\end{para}

\begin{thm}\label{unitalmainthm}
	Over a base field of characteristic zero, the forgetful functor
	\[
	\ho{\ucdga}\longrightarrow\ho{\udga}
	\]
	is faithful. More generally,  the map $
	\rmap{\ucdga}(A, A') \to\rmap{\udga}(A, A')
	$
	induced by the forgetful functor induces an injection on all homotopy groups, for any $A, A'\in\ucdga$.
\end{thm}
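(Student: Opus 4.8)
The plan is to mirror the non-unital argument of \cref{thm:mapping space} and the subsequent proof, but with dg Lie algebras replaced throughout by \emph{curved} Lie algebras, and correspondingly with $\MC_\bullet$ replaced by its curved analogue. First I would establish a unital version of \cref{mappingspaces}: for $A, A' \in \udga$ one should have $\rmap{\udga}(A,A') \simeq \MC_\bullet\bigl(\hom_\K(\Buass A, A')\bigr)$, where $\Buass A$ is the bar construction for \emph{unital} associative algebras, which is a curved coalgebra (over the cooperad $\coass$ with a curving coming from the unit), so that $\hom_\K(\Buass A, A')$ is naturally a complete filtered \emph{curved} Lie algebra, and $\MC_\bullet$ denotes the Maurer--Cartan simplicial set of a complete filtered curved Lie algebra. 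Likewise $\rmap{\ucdga}(A,A') \simeq \MC_\bullet\bigl(\hom_\K(\Bucom A, A')\bigr)$ with $\Bucom A$ the Koszul dual curved Lie coalgebra of the unital commutative algebra $A$. The identification of $\rmap{\udga}$ with Maurer--Cartan elements of the convolution curved Lie algebra rests on the unital bar--cobar adjunction, for which I would cite the treatment of curved Koszul duality for unital algebras (e.g. Hirsh--Millès, or the more recent work of Le Grignou--Roca i Lucio that was alluded to in the acknowledgments); the comparison of $\MC_\bullet$ of the completed convolution algebra with the simplicial set $n \mapsto \MC_0(\hom_\K(\Buass A, A' \otimes \Omega_n))$ again goes through \cite[Theorem 4.1]{rnw19}, which is insensitive to the presence of a curvature term since the relevant homotopy-invariance statement is at the level of the underlying filtered objects.

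Next I would upgrade \cref{criterion} to the curved setting. The statement should be: if $i\colon \h \to \g$ is a morphism of complete filtered \emph{curved} Lie algebras admitting a filtration-preserving retraction $r$ which is a morphism of filtered $\h$-modules (here $\h$-module structures make sense for curved Lie algebras, the curvature not affecting the module action), then $\MC_\bullet(\h) \to \MC_\bullet(\g)$ is injective on all homotopy groups at every basepoint. The proof of \cref{criterion} in \cite{CPRNW19} is, at its heart, an obstruction-theoretic argument run on the associated graded pieces of the filtration together with a convergence argument; the curvature lives in filtration degree $\geq 1$ and contributes to the twisted differentials $\h^\alpha, \g^\alpha$ at a Maurer--Cartan element $\alpha$ but does not disturb the split injection $H_\bullet(\h^\alpha) \hookrightarrow H_\bullet(\g^\alpha)$ induced by $r$, nor the inductive lifting of gauges. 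So I expect this upgrade to be essentially formal, amounting to checking that each step of the original proof only used the Lie bracket, the differential, and the filtration — never the flatness condition $d^2 = 0$ on the nose.

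The third ingredient is a curved analogue of \cref{retraction}: the projection $\Buass A \to \Bucom A$ — which at the operadic level is again induced by $\coass \to \colie$, now in a curved-cooperad sense — admits a filtration-preserving section, and dually $\hom_\K(\Buass A, A') \to \hom_\K(\Bucom A, A')$ admits a filtered retraction which is a morphism of $\hom_\K(\Bucom A, A')$-modules. The key point is that $\Buass A$ is the universal enveloping (curved) coalgebra of $\Bucom A$, and the dual PBW isomorphism \eqref{infcobimod} is a statement about the \emph{underlying} infinitesimal bicomodules, which the curving does not touch; so the section $\Bucom A \to \Buass A$ of \cref{retraction} still exists and is still a map of curved $\Bucom A$-comodules and compatible with coradical filtrations. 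Applying $\hom_\K(-,A')$ and invoking the curved \cref{criterion} then finishes the proof exactly as in the non-unital case.

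The main obstacle will be setting up the curved formalism cleanly and verifying that all the cited black boxes — the unital bar--cobar adjunction, the identification of $\rmap{\udga}$ with MC elements of a curved convolution algebra, and \cite[Theorem 4.1]{rnw19} — really do go through for complete filtered \emph{curved} Lie algebras without hidden flatness hypotheses. In particular one must be careful that $\MC_\bullet$ of a complete filtered curved Lie algebra is still a Kan complex and still detects gauge equivalence in $\pi_0$; this is known but requires the completeness of the filtration in an essential way (the curvature must be pushed into positive filtration degree so that the Maurer--Cartan equation $d\alpha + \tfrac12[\alpha,\alpha] + \theta = 0$ has a good deformation theory). Once these foundations are in place, the three-step structure — unital mapping space model, curved retraction criterion, curved dual PBW retraction — assembles with no further difficulty, just as in the non-unital proof.
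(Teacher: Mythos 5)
Your three-step plan --- a curved-Lie-algebra model for the unital mapping spaces, a curved upgrade of \cref{criterion}, and a PBW-based filtered retraction untouched by the curvature --- is exactly the architecture of the paper's proof (\cref{curved criterion}, \cref{curved mapping spaces}, and the observation that the dual PBW splitting is a statement about underlying infinitesimal bicomodules). The one place where your route genuinely differs is the curved criterion: you propose to re-run the obstruction-theoretic proof of \cite[Theorem 1.7]{CPRNW19} and check that no step uses $d^2=0$, whereas the paper reduces it formally to the uncurved statement by twisting --- if $\MC(\h)=\varnothing$ the claim is vacuous, and otherwise twisting by $\alpha\in\MC(\h)$ and $i(\alpha)$ kills the curvature and one applies \cref{criterion} to $\h^\alpha\to\g^{i(\alpha)}$ verbatim. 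The twisting trick buys you a two-line argument and avoids re-auditing a long convergence proof; your route would also work but at the cost of that audit. A second point where the paper does more than you allow for: the claim that \cite[Theorem 4.1]{rnw19} is ``insensitive to the curvature term'' is not quite automatic. The paper handles it by Getzler's gauge condition and Bandiera's homotopy-transfer interpretation \cite{bandiera2017descent}, reducing the horizontal equivalences in the relevant square to the uncurved case again by twisting, and invoking Getzler's curved homotopy-transfer result \cite{getzler2018maurer} for the identification $\MC(\mathfrak G)^{s=0}=\MC(\mathfrak H)$; so the ``black box'' you flag as the main obstacle is precisely where a citation to the curved literature (rather than a formal transfer of the uncurved statement) is needed, together with \cite[Proposition 134]{LeGrignou} for the curved bar--cobar correspondence and \cite[Proposition 5.2.8]{hirshmilles} for cofibrancy of the unital bar--cobar resolution. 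With those references supplied, your proposal closes the same way the paper does.
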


\begin{para}
	The main difference in the unital case is the following. Our strategy for proving \cref{mainthm} was to model the two mapping spaces as Maurer--Cartan simplicial sets of two dg Lie algebras, and then to apply our criterion \cref{criterion}. However, in the categories of unital (commutative) dg algebras, it is simply not possible that mapping spaces can be modeled by dg Lie algebras in this way. Indeed, dg Lie algebras intrinsically model \emph{pointed} spaces, since the element $0$ provides a canonical basepoint of the Maurer--Cartan space. The space of maps between two non-unital rings is canonically pointed by the zero map, but the space of maps between two unital rings has no natural basepoint. In fact the space of maps may even be empty: for example, there does not exist any homomorphism $\mathbf C \to \mathbf R$, derived or otherwise. To overcome this we need to systematically replace dg Lie algebras with \emph{curved Lie algebras}. 
\end{para}

\begin{para}
	Indeed, from the point of view of rational homotopy theory, the choice of an augmentation of a commutative dg algebra corresponds to fixing a basepoint of a space. Thus, one may think that unital algebras without augmentations should correspond to unbased spaces. It is a remarkable insight of Positselski \cite{positselski} that Koszul duality can be extended to unital algebras by introducing a \emph{curvature} term on the other side of the duality: the Koszul dual of a unital commutative dg algebra is a \emph{curved} Lie coalgebra. Similarly counital coalgebras are Koszul dual to curved algebras. It is then natural that Maurer--Cartan spaces of curved Lie algebras can be fruitfully used to model unbased spaces. Further operadic and homotopical studies of curved Lie (or $\Loo$-) algebras have been pursued by  \cite{hirshmilles,braun2012operads,maunder,LeGrignou,le2018homotopy,calaque2021lie,RocaLucio}. 
\end{para}

\begin{para}
	The proof of \cref{unitalmainthm} will closely follow the proof of \cref{mainthm}, except that we need to replace \cref{criterion} and \cref{mappingspaces} with the following curved counterparts.
\end{para}

\begin{prop}\label{curved criterion}
	Let $i \colon \h \to \g$ be a morphism of complete filtered curved Lie algebras over a field of characteristic zero. Suppose that there exists $r \colon \g \to \h$ such that $r \circ i = \mathrm{id}_\h$, and $r$ is a morphism of filtered $\h$-modules. Then the map $\MC_\bullet(\h) \longrightarrow \MC_\bullet(\g)$ induces an injection on all homotopy groups for all basepoints.
\end{prop}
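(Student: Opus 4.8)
\textbf{Plan for the proof of \cref{curved criterion}.}

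The plan is to mimic the proof of \cref{criterion} (i.e.\ \cite[Theorem 1.7]{CPRNW19}) as closely as possible, taking advantage of the fact that the extra curvature term does not interact with the essential mechanism of that argument, which is a convergence argument driven by the filtration-compatible retraction $r$. The key observation is that a morphism of curved Lie algebras $\phi\colon \h\to\g$ is still an honest morphism of the underlying graded (filtered) vector spaces commuting with the brackets, and that once we twist by a Maurer--Cartan element $\alpha\in\MC(\h)$, the twisted objects $\h^\alpha$ and $\g^{i(\alpha)}$ are \emph{genuine} (uncurved) complete filtered dg Lie algebras: the curvature is absorbed into the twisted differential, whose square is zero precisely because $\alpha$ satisfies the curved Maurer--Cartan equation. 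Since the curvature lives in filtration degree $\geq 1$ (recall all filtrations here are concentrated in positive degree) and $i$, $r$ are filtration-preserving, the retraction $r$ descends to a filtration-preserving retraction $r^\alpha\colon \g^{i(\alpha)}\to\h^\alpha$ of complete filtered \emph{dg} Lie algebras, and it remains a morphism of filtered $\h^\alpha$-modules.

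First I would recall that $\MC_\bullet(-)$ for curved Lie algebras is defined by the same formula $\MC_\bullet(\g)=\MC(\g\cotimes\Omega_\bullet)$, where now $\MC(\g)$ denotes the \emph{curved} Maurer--Cartan set, and that this is again a Kan complex (this is part of the foundational package for curved $\Loo$-algebras developed in the references cited in the paper, e.g.\ \cite{maunder, LeGrignou, RocaLucio}). Next I would observe the standard fact that for $\alpha\in\MC(\g)$ there is a natural based homotopy equivalence between the component of $\MC_\bullet(\g)$ containing $\alpha$ (pointed at $\alpha$) and the component of $\MC_\bullet(\g^\alpha)$ containing $0$ (pointed at $0$): twisting sets up a bijection on Maurer--Cartan elements sending $\alpha$ to $0$, compatibly with the simplicial structure, because tensoring with $\Omega_\bullet$ commutes with twisting. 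Thus the statement about an injection on homotopy groups at a basepoint $\alpha\in\MC_\bullet(\h)$ reduces to the statement about an injection on homotopy groups at the \emph{canonical} basepoint $0$ of $\MC_\bullet(\h^\alpha)\to\MC_\bullet(\g^{i(\alpha)})$ — and now both sides are Maurer--Cartan spaces of honest complete filtered dg Lie algebras, equipped with a filtration-preserving retraction that is a module morphism. At this point \cref{criterion} applies verbatim and finishes the proof.

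The main obstacle, I expect, is bookkeeping around the curvature and twisting rather than any genuinely new idea: one must check carefully that $r$ being a morphism of curved $\h$-modules (as opposed to just a chain map) is exactly the hypothesis needed for $r^\alpha$ to be a morphism of $\h^\alpha$-modules after twisting, and that the twisting isomorphism on Maurer--Cartan simplicial sets is compatible with the maps $i$ and $r$ so that the reduction to the $0$ basepoint is legitimate. One should also verify the convergence of the twisted differential and of the exponential/gauge action in the curved setting, but this is automatic from completeness of the filtration together with the fact that curvature and bracket terms all raise filtration degree. A secondary point to address is that we only need the retraction hypothesis for the fixed basepoint $\alpha$ under consideration, but since the hypothesis is stated uniformly for $\h\to\g$ it applies to every $\alpha$ simultaneously; no separate argument per basepoint is required beyond the twisting reduction. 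In short, the proof is a reduction: \emph{twist to kill the curvature, then invoke \cref{criterion}}.
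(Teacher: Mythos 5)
Your proposal is correct and is essentially the paper's own argument: the paper likewise reduces \cref{curved criterion} to \cref{criterion} by twisting by a Maurer--Cartan element $\alpha \in \MC(\h)$, noting that $\h^\alpha \to \g^{i(\alpha)}$ is then a morphism of genuine complete filtered dg Lie algebras (the case $\MC(\h)=\varnothing$ being vacuous). Your additional bookkeeping about the retraction and the change-of-basepoint equivalence just spells out details the paper leaves implicit.
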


\begin{prop}\label{curved mapping spaces}
	Let $A$ and $A'$ be unital dg algebras over a field $\K$ of characteristic zero. Then:
	\begin{itemize}
		\item[$\diamond$] $\rmap{\udga}(A, A')\simeq\MC_\bullet(\hom_\K(\Buass A, A'))$.
		\item[$\diamond$] If $A$ and $A'$ are commutative, then $\rmap{\ucdga}(A, A')\simeq\MC_\bullet(\hom_\K(\Bucom A, A'))$.
	\end{itemize}
	Here, both $\hom_\K(\Buass A, A')$ and $\hom_\K(\Bucom A, A')$ are considered as complete filtered curved Lie algebras, with the filtrations induced by the coradical filtrations of $\Buass A$ and $\Bucom A$ respectively. In the commutative case, the projection $\Buass A\to \Bucom A$ induces the natural map $\rmap{\udga}(A, A')\longrightarrow\rmap{\ucdga}(A, A').$
\end{prop}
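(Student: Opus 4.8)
The plan is to follow the proof of \cref{thm:mapping space} line by line, replacing ordinary Koszul duality by its curved counterpart. As there, the two bullet points have essentially the same proof, so I will describe only the associative case; the commutative one is identical with $\Bucom$ in place of $\Buass$ and with the curved Koszul duality between unital commutative dg algebras and (conilpotent) curved Lie coalgebras of Positselski \cite{positselski} and Hirsh--Millès \cite{hirshmilles} in place of the associative self-duality. First I recall the curved bar--cobar adjunction for unital algebras: the bar construction $\Buass A$ of a unital dg algebra $A$ is a conilpotent \emph{curved} coassociative coalgebra, its cobar construction $\Cobuass \Buass A$ is an honest (uncurved) dg algebra — the cocurvature of $\Buass A$ being absorbed into a component of the cobar differential — and the counit $\Cobuass\Buass A \to A$ is a quasi-isomorphism of unital dg algebras with quasi-free source, hence a cofibrant replacement in $\udga$; see \cite{hirshmilles, LeGrignou, le2018homotopy, RocaLucio}. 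I will use $\Cobuass\Buass$ as the cofibrant replacement functor computing $\rmap{\udga}$.

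Next I invoke the curved analogue of \cite[Theorem 11.3.1]{lodayvallette}: for any unital dg algebra $B$, the set $\hom_{\udga}(\Cobuass\Buass A, B)$ is in natural bijection with the set of Maurer--Cartan elements of the convolution \emph{curved} Lie algebra $\hom_\K(\Buass A, B)$, whose curvature is the composite of the cocurvature $\Buass A \to \K$ with the unit $\K \to B$ — this is the curved twisting-morphism formalism of \cite{hirshmilles, LeGrignou, RocaLucio}. Applying it with $B = A'\otimes\Omega_n$ gives
\[
\rmap{\udga}(A, A')_n = \hom_{\udga}\!\left(\Cobuass\Buass A,\, A'\otimes\Omega_n\right) \cong \MC\!\left(\hom_\K(\Buass A, A'\otimes\Omega_n)\right),
\]
naturally in $[n]$, so that one is reduced, exactly as in the proof of \cref{thm:mapping space}, to comparing the two simplicial sets
\[
\MC\!\left(\hom_\K(\Buass A, A')\cotimes\Omega_\bullet\right) \qquad\text{and}\qquad \MC\!\left(\hom_\K(\Buass A, A'\otimes\Omega_\bullet)\right).
\]
The natural inclusion $\hom_\K(\Buass A, A')\otimes\Omega_\bullet \hookrightarrow \hom_\K(\Buass A, A'\otimes\Omega_\bullet)$ is a morphism of complete filtered curved Lie algebras for the filtrations induced by the coradical filtration of $\Buass A$ — note the curvature sits in filtration degree $1$, so these are genuine complete filtered curved Lie algebras in the sense of our conventions — and, the target being complete, it factors canonically through the completion of the source, producing the comparison map between the two displayed simplicial sets.

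The main obstacle is this last step: one needs a \emph{curved} version of \cite[Theorem 4.1]{rnw19}, to the effect that the above filtration-preserving map induces a homotopy equivalence of Maurer--Cartan spaces. I expect this to follow by adapting the proof of \cite[Theorem 4.1]{rnw19}, which is an inductive argument up the complete filtration comparing obstruction theory one filtration degree at a time; passing from the dg to the curved setting only modifies the weight-zero piece of the differential on each associated graded stratum by a fixed element, the curvature, which does not interfere with the inductive lifting and uniqueness statements. (Alternatively, one may invoke directly the invariance results for curved Maurer--Cartan spaces established in \cite{LeGrignou, le2018homotopy, RocaLucio}.) Granting this, the chain of identifications above yields the desired weak equivalence $\rmap{\udga}(A, A') \simeq \MC_\bullet(\hom_\K(\Buass A, A'))$, functorially in both variables, and the compatibility with the projection $\Buass A \to \Bucom A$ in the commutative case is then immediate from naturality, exactly as in \cref{thm:mapping space}.
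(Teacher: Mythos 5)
Your overall skeleton coincides with the paper's proof: the same three ingredients appear (the curved bar--cobar resolution $\Cobuass\Buass A \to A$ as cofibrant replacement, the curved convolution/twisting-morphism correspondence identifying $\hom_{\udga}(\Cobuass\Buass A, B)$ with $\MC(\hom_\K(\Buass A,B))$, and the comparison of $\MC(\hom_\K(\Buass A,A')\cotimes\Omega_\bullet)$ with $\MC(\hom_\K(\Buass A,A'\otimes\Omega_\bullet))$), and the first two are adequately covered by the curved Koszul duality literature you cite (the paper uses \cite[Proposition 5.2.8]{hirshmilles} plus the ``good'' weight-grading filtration on the generators for cofibrancy --- quasi-freeness alone is not enough --- and \cite[Proposition 134]{LeGrignou} for the Maurer--Cartan correspondence). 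The genuine gap is the third step, which is where essentially all of the work in the paper's proof lies: you only assert that a curved version of \cite[Theorem 4.1]{rnw19} ``should follow by adapting'' its proof. Your description of that proof as an inductive obstruction-theoretic climb up the complete filtration does not match the actual argument, which (as recalled in \S\S\ref{rnw summary start}--\ref{rnw summary end}) runs through Getzler's Dupont gauge condition $s_\bullet=0$ and Bandiera's homotopy-transfer identification $\MC(\mathfrak G)^{s=0}=\MC(\mathfrak H)$; and the heuristic that curvature ``only modifies the weight-zero piece of the differential on each associated graded stratum'' is not something one can run with: in a curved Lie algebra the differential no longer squares to zero and the Maurer--Cartan equation itself acquires the curvature term, so the relevant lifting and uniqueness statements must be re-established rather than inherited.

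What closes this gap in the paper is concrete: (i) the horizontal arrows in the gauge-condition square, i.e.\ the inclusions $\MC(-)^{s_\bullet=0}\hookrightarrow\MC_\bullet(-)$, are equivalences in the curved setting because one can twist by a Maurer--Cartan element as in \S\S\ref{twisting 1}--\ref{twisting 2} to reduce to Getzler's uncurved statement (the case of empty Maurer--Cartan space being trivial); (ii) the identification $\MC(\mathfrak G)^{s=0}=\MC(\mathfrak H)$, with $\mathfrak H$ carrying the transferred \emph{curved} $\Loo$-structure, is not Bandiera's theorem \cite{bandiera2017descent} but its curved extension due to Getzler \cite{getzler2018maurer}; (iii) finite-dimensionality of $C_n$ then makes $\hom_\K(\Buass A,A')\otimes C_n\to\hom_\K(\Buass A,A'\otimes C_n)$ an isomorphism, giving the claimed equivalence. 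Your fallback of ``invoking directly the invariance results'' of \cite{LeGrignou,le2018homotopy,RocaLucio} is too vague to substitute for this: you would need to point to a precise statement covering the completed tensor product against $\Omega_\bullet$ for complete filtered curved Lie algebras. So the proposal follows the right strategy but is incomplete exactly at its crux.
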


\begin{para}
	Once \cref{curved criterion} and \cref{curved mapping spaces} are in place, the proof proceeds just as in the uncurved case, by observing that the Poincar\'e--Birkhoff--Witt theorem implies the existence of a retraction $\Bucom A \to \Buass A$. In the rest of this section we explain why \cref{curved criterion} and \cref{curved mapping spaces} are true. \end{para}

\begin{para}\label{twisting 1}
	The proof of \cref{curved criterion} formally reduces to its uncurved analogue, \cref{criterion}, by means of \emph{twisting}. Recall that if $\g$ is a complete filtered dg Lie algebra and $\alpha \in \MC(\g)$, then we may twist the differential of $\g$ by $\alpha$ to obtain a new complete filtered dg Lie algebra $\g^\alpha$. If we think of complete filtered dg Lie algebras as models for pointed spaces, then this construction can be understood geometrically as changing the basepoint: given a pointed space $(X,x_0)$ and a point $x_1 \in X$, we obtain a new pointed space $(X,x_1)$. If $\g$ is instead a complete filtered \emph{curved} Lie algebra, then we may similarly twist $\g$ by an \emph{arbitrary} element $\alpha \in \g_{-1}$, obtaining a new complete filtered curved Lie algebra $\g^\alpha$. See for instance \cite[Section 7.1]{braun2012operads}. In the special case that $\alpha$ is a Maurer--Cartan element, the twisted algebra $\g^\alpha$ has vanishing curvature term; that is, $\g^\alpha$ is a dg Lie algebra when $\alpha \in \MC(\g)$. Recalling that complete filtered  curved Lie algebras model unbased spaces, the geometric interpretation is instead that if $X$ is an unbased space and $x_1 \in X$ is a point, then $(X,x_1)$ is a pointed space. 
\end{para}

\begin{para}\label{twisting 2}
	It is now clear how \cref{curved criterion} follows from \cref{criterion}. Indeed, suppose first in the situation of \cref{curved criterion} that $\MC(\h)=\varnothing$. In this case the statement is vacuously true. Otherwise, pick a Maurer--Cartan element $\alpha \in \h$, and apply \cref{criterion} to the morphism of dg Lie algebras $\h^\alpha \to \g^{i(\alpha)}$. 
\end{para}

\begin{para}
	The proof of \cref{curved mapping spaces} follows closely the proof of \cref{thm:mapping space}.  We need three ingredients in order to carry out the argument: 
	\begin{itemize}
		\item [$\diamond$] The bar-cobar construction $\Cobucom \Bucom A \to A$ is a cofibrant replacement functor in the category $\ucdga$.
		\item [$\diamond$] If $A$ is a unital cdga and $L$ is a curved conilpotent Lie coalgebra, then Maurer--Cartan elements of the curved Lie algebra $\hom_\K(L,A)$ naturally correspond to unital cdga morphisms $\Cobucom L \to A$.
		\item [$\diamond$] The natural inclusion
		$ \hom_\K(\Bucom A,A') \cotimes \Omega_\bullet \hookrightarrow \hom_\K(\Bucom A,A' \otimes \Omega_\bullet) $
		induces a homotopy equivalence on simplicial Maurer--Cartan spaces. 
	\end{itemize}
	(And, of course, we also need the evident analogues of the above statements in the associative case.) The fact that $\Cobucom \Bucom A \to A$ is a quasi-isomorphism is part of curved Koszul duality \cite[Proposition 5.2.8]{hirshmilles}). Moreover, the algebra $\Cobucom \Bucom A$ is quasi-free, and its space of generators (i.e.\ $\Bucom A$) is equipped with a ``good'' filtration, namely the filtration induced by the weight grading of $\mathsf{uCom}^\antishriek$. Hence the bar-cobar resolution is cofibrant, which shows the first bullet point. The second bullet point is \cite[Proposition 134]{LeGrignou}. Thus, all that remains to explain is the last bullet point. The argument is a minor modification of the one used to prove \cite[Theorem 4.1]{rnw19}. We now recall this argument and then explain how it should be modified. 
\end{para}

\begin{para}\label{rnw summary start}
	Let us first review a construction due to Getzler \cite{getzler09mcgroupoid}. Getzler considers the \emph{Dupont contraction}, which is an explicit contraction
	\[
	\hbox{
		\begin{tikzpicture}
			\def\upshift{0.075}
			\def\downshift{0.075}
			\pgfmathsetmacro{\midshift}{0.005}
			
			\node[left] (x) at (0, 0) {$\Omega_\bullet$};
			\node[right=1.5 cm of x] (y) {$C_\bullet$};
			
			\draw[->] ($(x.east) + (0.1, \upshift)$) -- node[above]{\mbox{\tiny{$p_\bullet$}}} ($(y.west) + (-0.1, \upshift)$);
			\draw[->] ($(y.west) + (-0.1, -\downshift)$) -- node[below]{\mbox{\tiny{$i_\bullet$}}} ($(x.east) + (0.1, -\downshift)$);
			
			\draw[->] ($(x.south west) + (0, 0.1)$) to [out=-160,in=160,looseness=5] node[left]{\mbox{\tiny{$s_\bullet$}}} ($(x.north west) - (0, 0.1)$);
			
	\end{tikzpicture}}
	\]
	in the category of simplicial chain complexes, where $C_n \subset \Omega_n$ is a subcomplex isomorphic to the cellular cochains on $\Delta_n$.  For a complete filtered dg Lie algebra $\g$, we denote by  $\MC_\bullet(\g)^{s_\bullet=0}$
	the subspace of $\MC_\bullet(\g)$ consisting of Maurer--Cartan elements which satisfy the ``gauge condition'' $s_\bullet(\alpha)=0$.\footnote{Getzler denotes this simplicial set $\gamma_\bullet(\g)$.} Getzler shows for nilpotent $\g$ that $\MC_\bullet(\g)^{s_\bullet=0}$ is a Kan complex and that $\MC_\bullet(\g)^{s_\bullet=0} \hookrightarrow \MC_\bullet(\g)$ is a homotopy equivalence. The result extends to the complete filtered case as an easy consequence of Getzler's arguments, see \cite[Proposition 5.4]{berglund2015rational}.
\end{para}

\begin{para}
	Returning to the inclusion of simplicial dg Lie algebras 
	\[
	\hom_\K(\Bcom A,A') \cotimes \Omega_\bullet \xhookrightarrow{\quad} \hom_\K(\Bcom A,A' \otimes \Omega_\bullet),
	\]
	the point is now that the Dupont contraction operates on both sides of the inclusion. Thus we obtain a commuting diagram of simplicial sets
	\[
	\hbox{
		\begin{tikzpicture}
			\def\xdist{1};
			\def\ydist{0.8};
			
			\node (a) at (0, 0) {$\MC(\hom_\K(\Bcom A,A')\cotimes\Omega_\bullet)^{s_\bullet=0}$};
			\node[right] (b) at ($(a.east) + (\xdist, 0)$) {$\MC(\hom_\K(\Bcom A,A') \cotimes \Omega_\bullet)$};
			\node[below] (c) at ($(a.south) + (0, -\ydist)$) {$\MC(\hom_\K(\Bcom A,A' \otimes \Omega_\bullet))^{s_\bullet=0}$};
			\node[below] (d) at ($(b.south) + (0, -\ydist)$) {$\MC(\hom_\K(\Bcom A,A' \otimes \Omega_\bullet)).$};
			
			\draw[->] (a) to (b);
			\draw[->] (a) to (c);
			\draw[->] (b) to (d);
			\draw[->] (c) to (d);
		\end{tikzpicture}
	}
	\]
	What Getzler showed is precisely that the top horizontal arrow is an equivalence. Repeating his argument with obvious modifications shows that the bottom horizontal arrow is an equivalence, too. The right vertical arrow is the one we want to prove is an equivalence. The left vertical arrow will turn out to be an \emph{isomorphism} of simplicial sets, finishing the argument.
\end{para}

\begin{para}\label{rnw summary end}
	To see that 
	\[
	\MC(\hom_\K(\Bcom A,A') \cotimes \Omega_n)^{s_n=0} \longrightarrow \MC(\hom_\K(\Bcom A,A' \otimes \Omega_n))^{s_n=0}
	\]
	is a bijection for all $n$, we use a reinterpretation of Getzler's gauge condition due to Bandiera \cite{bandiera2017descent}.
	Suppose that we are given a complete filtered $\Loo$- algebra $\mathfrak G$, and a contraction
	\[
	\hbox{
		\begin{tikzpicture}
			\def\upshift{0.075}
			\def\downshift{0.075}
			\pgfmathsetmacro{\midshift}{0.005}
			
			\node[left] (x) at (0, 0) {$\mathfrak{G}$};
			\node[right=1.5 cm of x] (y) {$\mathfrak{H}$};
			
			\draw[->] ($(x.east) + (0.1, \upshift)$) -- node[above]{\mbox{\tiny{$p$}}} ($(y.west) + (-0.1, \upshift)$);
			\draw[->] ($(y.west) + (-0.1, -\downshift)$) -- node[below]{\mbox{\tiny{$i$}}} ($(x.east) + (0.1, -\downshift)$);
			
			\draw[->] ($(x.south west) + (0, 0.1)$) to [out=-160,in=160,looseness=5] node[left]{\mbox{\tiny{$s$}}} ($(x.north west) - (0, 0.1)$);
			
	\end{tikzpicture}}
	\]
	of filtered chain complexes. To this data, the procedure of Homotopy Transfer \cite[Section 10.3]{lodayvallette} associates a canonical structure of complete filtered $\Loo$-algebra on $\mathfrak H$, and Bandiera shows that $\MC(\mathfrak G)^{s=0} = \MC(\mathfrak H)$. In our situation, this means that the simplicial dg Lie algebra structure on $\g \cotimes \Omega_\bullet$ induces a simplicial $\Loo$-structure on $\g \otimes C_\bullet = \g \cotimes C_\bullet$,\footnote{We have $\g \otimes C_\bullet = \g \cotimes C_\bullet$ since $C_n$ is a finite-dimensional chain complex for all $n$.} and that
	\[
	\MC_\bullet(\g)^{s_\bullet=0} = \MC(\g \otimes C_\bullet).
	\]
	But this finishes the result: we are reduced to showing that 
	\[
	\MC(\hom_\K(\Bcom A,A') \otimes C_n) \to \MC(\hom_\K(\Bcom A,A' \otimes C_n))
	\]
	is a bijection. But $\hom_\K(\Bcom A,A') \otimes C_n \to \hom_\K(\Bcom A,A' \otimes C_n)$ is an isomorphism, since $C_n$ is a finite-dimensional chain complex. 
	\end{para}

\begin{rem}
	One can also show directly that $\MC(\g \otimes C_\bullet) \to \MC(\g \cotimes \Omega_\bullet)$ is a homotopy equivalence, without passing through Getzler's gauge condition, although ultimately the arguments are quite similar \cite[Theorem 3.3]{RN17}.
\end{rem}

\begin{para}
	Let us now return to the unital case, and to the inclusion of simplicial curved Lie algebras
	\[
	\hom_\K(\Bucom A,A') \cotimes \Omega_\bullet \xhookrightarrow{\quad} \hom_\K(\Bucom A,A' \otimes \Omega_\bullet).
	\]
	Again, we get a commuting diagram
	\[
	\hbox{
		\begin{tikzpicture}
			\def\xdist{1};
			\def\ydist{0.8};
			
			\node (a) at (0, 0) {$\MC(\hom_\K(\Bucom A,A') \cotimes \Omega_\bullet)^{s_\bullet=0}$};
			\node[right] (b) at ($(a.east) + (\xdist, 0)$) {$\MC(\hom_\K(\Bucom A,A') \cotimes \Omega_\bullet)$};
			\node[below] (c) at ($(a.south) + (0, -\ydist)$) {$\MC(\hom_\K(\Bucom A,A' \otimes \Omega_\bullet))^{s_\bullet=0}$};
			\node[below] (d) at ($(b.south) + (0, -\ydist)$) {$\MC(\hom_\K(\Bucom A,A' \otimes \Omega_\bullet)).$};
			
			\draw[->] (a) to (b);
			\draw[->] (a) to (c);
			\draw[->] (b) to (d);
			\draw[->] (c) to (d);
		\end{tikzpicture}
	}
	\]
	The horizontal arrows are still equivalences: indeed, this reduces to the uncurved case by ``twisting'' as in \S\S\ref{twisting 1}--\ref{twisting 2}. 	The left vertical arrow is still an isomorphism: the interpretation of the gauge condition in terms of Homotopy Transfer is proven in the curved setting in \cite{getzler2018maurer}. 
	To be precise, Getzler shows that in the curved setting it also holds $\MC(\mathfrak G)^{s=0} = \MC(\mathfrak H)$, where $\mathfrak H$ inherits from homotopy transfer a curved $\Loo$-algebra structure. As before, the conclusion follows from
	\begin{align*}
		\MC(\hom_\K(\Bucom A,A') \cotimes \Omega_\bullet)^{s_\bullet=0}&{} = \MC(\hom_\K(\Bucom A,A') \otimes C_\bullet) \stackrel{\cong}{\longrightarrow} \\
		&{}\stackrel{\cong}{\longrightarrow}\MC(\hom_\K(\Bucom A,A' \otimes C_\bullet)) = 	\MC(\hom_\K(\Bucom A,A' \otimes \Omega_\bullet))^{s_\bullet=0}.
	\end{align*}
\end{para}

\section{Enveloping algebras}\label{sec:enveloping algebras}

\begin{para}
	In this section we prove \cref{thm:B}. We write $\Blie$ for the bar construction from dg Lie algebras to dg cocommutative coalgebras and $\Coblie$ for its left adjoint cobar construction.
\end{para}

\begin{lem}\label{enveloping lemma}
	Let $\g$ and $\h$ be dg Lie algebras over a field of characteristic zero. There is a commutative diagram
	\[
	\hbox{
			\begin{tikzpicture}
					\def\xdist{2};
					\def\ydist{0.8};
					
					\node (a) at (0, 0) {$\MC_\bullet(\hom_\K(\Blie \g, \h))$};
					\node[right] (b) at ($(a.east) + (\xdist, 0)$) {$\MC_\bullet(\hom_\K(\Blie \g, U\h)).$};
					\node[below] (c) at ($(a.south) + (0, -\ydist)$) {$\rmap{\dgla}(\g,\h)$};
					\node[below] (d) at ($(b.south) + (0, -\ydist)$) {$\rmap{\dga}(U\g,U\h).$};
					
					\draw[->] (a) to (b);
					\draw[<-] (c) to node[sloped, below=-0.05cm]{$\sim$} (a);
					\draw[<-] (d) to node[sloped, below=-0.05cm]{$\sim$} (b);
					\draw[->] (c) to (d);
				\end{tikzpicture}
		}
	\]
	in which the vertical arrows are homotopy equivalences, the upper horizontal arrow is induced by the natural map $\h \to U\h$, and the lower horizontal arrow is induced from the universal enveloping algebra functor. 
\end{lem}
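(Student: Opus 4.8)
The plan is to identify each corner of the square with a Maurer--Cartan simplicial set by means of bar--cobar adjunctions, and then to see that it commutes by naturality. Throughout we use the bar--cobar resolution $\Coblie\Blie\g\to\g$ as our functorial cofibrant replacement in $\dgla$ (it is cofibrant in the Hinich model structure because $\mathsf{Lie}$ is Koszul). For the left vertical map, the proof of \cref{thm:mapping space} applies verbatim to algebras over any Koszul binary operad, in particular $\mathsf{Lie}$: in simplicial degree $n$ one has $\rmap{\dgla}(\g,\h)_n=\hom_{\dgla}(\Coblie\Blie\g,\h\otimes\Omega_n)$, which by the Lie analogue of \cite[Theorem~11.3.1]{lodayvallette} equals $\MC(\hom_\K(\Blie\g,\h\otimes\Omega_n))$, and comparing $\hom_\K(\Blie\g,\h)\cotimes\Omega_\bullet$ with $\hom_\K(\Blie\g,\h\otimes\Omega_\bullet)$ through \cite[Theorem~4.1]{rnw19}, exactly as in \cref{thm:mapping space}, yields the homotopy equivalence with $\MC_\bullet(\hom_\K(\Blie\g,\h))$, the convolution dg Lie algebra endowed with the complete filtration induced by the coradical filtration of $\Blie\g$.

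The right vertical map is where the one new idea enters: rather than modelling $\rmap{\dga}(U\g,U\h)$ through $\Bass U\g$, we feed the \emph{Lie} resolution into $U$. The functor $U\colon\dgla\to\dga$ is left Quillen, being left adjoint to the forgetful functor, which preserves fibrations and weak equivalences since both are detected on underlying complexes; moreover $U$ preserves all quasi-isomorphisms, since by the Poincar\'e--Birkhoff--Witt theorem it is naturally $\Sym$ on underlying complexes and $\Sym$ preserves quasi-isomorphisms in characteristic zero. Hence $U\Coblie\Blie\g$ is a cofibrant replacement of $U\g$ in $\dga$, and by the $U\dashv(\text{forget})$ adjunction followed by the Lie bar--cobar adjunction,
\[ \rmap{\dga}(U\g,U\h)_n=\hom_{\dga}\bigl(U\Coblie\Blie\g,\,U\h\otimes\Omega_n\bigr)=\hom_{\dgla}\bigl(\Coblie\Blie\g,\,U\h\otimes\Omega_n\bigr)=\MC\bigl(\hom_\K(\Blie\g,\,U\h\otimes\Omega_n)\bigr), \]
where $U\h\otimes\Omega_n$ is regarded as a dg Lie algebra via the commutator bracket. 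The same completion argument as before then gives $\rmap{\dga}(U\g,U\h)\simeq\MC_\bullet(\hom_\K(\Blie\g,U\h))$.

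Finally, commutativity. The bottom arrow is the derived enveloping algebra functor, which on the chosen resolutions sends a dg Lie morphism $\Coblie\Blie\g\to\h\otimes\Omega_n$ to the composite $U\Coblie\Blie\g\to U(\h\otimes\Omega_n)\to U\h\otimes\Omega_n$; unwinding the $U\dashv(\text{forget})$ adjunction turns this into postcomposition of $\Coblie\Blie\g\to\h\otimes\Omega_n$ with the Lie-algebra morphism $\h\otimes\Omega_n\to U\h\otimes\Omega_n$ induced by the adjunction unit $\h\to U\h$, and on convolution algebras this is exactly the map $\hom_\K(\Blie\g,\h)\to\hom_\K(\Blie\g,U\h)$ induced by $\h\to U\h$, i.e.\ the top arrow. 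Thus the square of Maurer--Cartan sets commutes before completion, hence compatibly with the equivalences produced by \cite[Theorem~4.1]{rnw19}. The step I expect to demand the most care is precisely this last bookkeeping: tracking the two adjunctions (enveloping algebra and bar--cobar) and their compatibility with the cosimplicial objects $\h\otimes\Omega_\bullet$ and $U\h\otimes\Omega_\bullet$ so as to obtain an honestly commuting square of simplicial sets rather than one commuting only up to homotopy. The argument is elementary but fiddly.
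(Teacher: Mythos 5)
Your proof is correct and follows essentially the same route as the paper: both take the resolutions $\Coblie\Blie\g \to \g$ and $U\Coblie\Blie\g \to U\g$ (the paper identifies the latter with $\Cobass\Blie\g$ via \cite[Lemma 4.7]{CPRNW19} and gets cofibrancy from triangulatedness, where you get it from $U$ being left Quillen), use PBW for preservation of quasi-isomorphisms, exhibit the same level-wise commuting squares, and invoke \cite[Theorem 4.1]{rnw19} for the completed-tensor comparison. The only cosmetic difference is that you reach the Maurer--Cartan description through the $U\dashv\mathrm{forget}$ adjunction and the Lie-side twisting-morphism correspondence rather than the associative bar--cobar adjunction; since $\Blie\g$ is cocommutative, the commutator of the convolution product agrees with the convolution of commutators, so the two identifications coincide.
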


\begin{proof}
	We remark that the left vertical arrow is constructed by the exact same arguments as in \cref{thm:mapping space}. As in the proof of \cref{thm:mapping space}, the two simplicial mapping spaces depend on the choice of a cofibrant replacement of $\g$ and $U\g$, respectively. For $\g$ we choose the bar-cobar resolution $\Coblie\Blie \g \stackrel\sim\to \g$. But for $U\g$ we choose instead to take the resolution
	\[
	\Cobass \Blie \g \stackrel\sim\longrightarrow U\g.
	\]
	This map is constructed as follows: the universal enveloping algebra functor preserves quasi-isomorphisms by the PBW theorem, so from the bar-cobar resolution of $\g$ we get a quasi-isomorphism $U\Coblie \Blie\g \stackrel \sim \to U\g$. But if $C$ is a cocommutative coalgebra, then $\Omega C \cong U \Coblie C$ \cite[Lemma 4.7]{CPRNW19}, so $  U \Coblie\Blie \g \cong \Cobass \Blie \g $. The algebra $\Omega \Blie \g$ is triangulated, hence cofibrant, by the same argument that shows that any bar-cobar resolution is cofibrant. 
	
	With these choices of cofibrant replacement, the simplicial enrichment constructed by Hinich (\S\ref{hinich model structure}) gives 
	\[
	\rmap{\dgla}(\g,\h)_n = \mathrm{Hom}_\dgla(\Coblie\Blie \g,\h \otimes \Omega_n) = \MC(\hom_\K(\Blie \g, \h \otimes \Omega_n))
	\]
	and
	\[
	\rmap{\dga}(U\g,U\h)_n = \mathrm{Hom}_\dga(\Cobass \Blie \g,U\h \otimes \Omega_n) = \MC(\hom_\K(\Blie \g, U\h \otimes \Omega_n))
	\]
	with the map between them given by the universal enveloping algebra functor. But now we have in each simplicial degree a commuting diagram
	\[
	\hbox{
		\begin{tikzpicture}
			\def\xdist{1};
			\def\ydist{0.8};
			
			\node (a) at (0, 0) {$\MC(\hom_\K(\Blie \g, \h) \cotimes \Omega_n)$};
			\node[right] (b) at ($(a.east) + (\xdist, 0)$) {$\MC(\hom_\K(\Blie \g, U\h)  \cotimes \Omega_n)$};
			\node[below] (c) at ($(a.south) + (0, -\ydist)$) {$\MC(\hom_\K(\Blie \g,\h \otimes \Omega_n))$};
			\node[below] (d) at ($(b.south) + (0, -\ydist)$) {$\MC(\hom_\K(\Blie \g,U\h \otimes \Omega_n))$};
			
			\draw[->] (a) to (b);
			\draw[->] (a) to (c);
			\draw[->] (b) to (d);
			\draw[->] (c) to (d);
		\end{tikzpicture}
	}
	\]
	with the horizontal arrows induced by $\h \to U\h$, which produces the commuting square of the lemma. The vertical arrows realize to homotopy equivalences by \cite[Theorem 4.1]{rnw19}.
\end{proof}

\begin{proof}[Proof of \cref{thm:B}]
	Applying \cref{enveloping lemma} and \cref{criterion}, it suffices to show that the map \[\hom_\K(\Blie \g, \h)  \to \hom_\K(\Blie \g, U\h)\] induced by the embedding $\h \to U\h$ admits a filtration-preserving left inverse as a map of $\hom_\K(\Blie \g, \h)$-modules. But $\h \to U\h$ admits a left inverse as a morphism of $\h$-modules, by the usual PBW theorem, and this gives the retraction we want by the functoriality explained in \S\ref{lie functoriality}. It clearly respects the filtrations, since the relevant filtrations are induced by the coradical filtration on $\Blie \g$. 
\end{proof}

\bibliographystyle{alpha}
\bibliography{bib}

\end{document}